\def\F{\mathbb{F}}
\def\N{\mathbb{N}}
\def\HWeyl{\mathcal{H}}
\def\LieHWeyl{\mathfrak{h}}
\def\subHWeyl{\mathcal{L}}
\def\coreLie{\mathfrak{g}}
\def\coreLieC{\overline{\coreLie}}
\def\freeAlg{\F\left<\genI,\genII\right>}
\def\allWords{\left<\genI,\genII\right>}
\def\idealH{\mathcal{K}}
\def\regfac{\star}
\def\anyAlg{\mathscr{A}}
\def\anyLIdeal{\mathcal{I}}
\def\twoLIdeal{\Pi}
\def\lbrak{\left[}
\def\rbrak{\right]}
\def\lpar{\left(}
\def\rpar{\right)}
\def\lreg{\left\llbracket}
\def\rreg{\right\rrbracket}
\def\lcomp{\left<}
\def\rcomp{\right>}
\def\into{\longrightarrow}
\def\setdiff{\backslash}
\def\isoH{\varphi}
\def\canmap{\Phi_\twoGen}
\def\Canmap{\Phi}
\def\HWsetmap{\upsilon}
\def\HWCanmap{\Canmap_\HWsetmap}
\def\degBA{\deg_{\genII\genI}}
\def\genI{\alpha}
\def\genII{\beta}
\def\ImgenI{A}
\def\ImgenII{\Omega}
\def\aWord{W}
\def\twoLie{\mathcal{T}}
\def\twoGen{\chi}
\DeclareMathOperator*{\Lie}{Lie}
\DeclareMathOperator*{\ad}{ad}
\DeclareMathOperator*{\id}{id}
\DeclareMathOperator*{\Span}{Span}
\DeclareMathOperator*{\reg}{reg}
\def\freeLie{\Lie\allWords}
\def\normalLie{\mathcal{S}_{\reg}}
\def\kcoreLieDer{{\textstyle\Span_\F}\{A^n\  :\  n\in\N,\  n\geq k+1\}}
\newtheorem{theorem}{Theorem}[section]
\newtheorem{corollary}[theorem]{Corollary}
\newtheorem{lemma}[theorem]{Lemma}
\newtheorem{proposition}[theorem]{Proposition}
\newtheorem{definition}[theorem]{Definition}
\newtheorem{example}[theorem]{Example}
\begin{document}

\title{Lie structure of the Heisenberg-Weyl algebra}
\titlemark{Lie structure of the Heisenberg-Weyl algebra}

\emsauthor{1}{Rafael Reno S. Cantuba}{R.~Cantuba}


\emsaffil{1}{Department of Mathematics and Statistics, De La Salle University,\\ 2401 Taft Ave., Malate, Manila, 1004 Metro Manila, Philippines \email{rafael\_cantuba@dlsu.edu.ph}}

\classification[16Z10, 81R50]{17B60}

\keywords{Heisenberg-Weyl algebra, commutation relation, free Lie algebra, Lie polynomial, combinatorics on words, Lyndon-Shirshov word, generators and relations}

\begin{abstract}
As an associative algebra, the Heisenberg-Weyl algebra $\HWeyl$ is generated by two elements $A$, $B$ subject to the relation $AB-BA=1$. As a Lie algebra, however, where the usual commutator serves as Lie bracket, the elements $A$ and $B$ are not able to generate the whole space $\HWeyl$. We identify a non-nilpotent but solvable Lie subalgebra $\coreLie$ of $\HWeyl$, for which, using some facts from the theory of bases for free Lie algebras, we give a presentation by generators and relations. Under this presentation, we show that, for some algebra isomorphism $\isoH:\HWeyl\into\HWeyl$, the Lie algebra $\HWeyl$ is generated by the generators of $\coreLie$, together with their images under $\isoH$, and that $\HWeyl$ is the sum of $\coreLie$, $\isoH(\coreLie)$ and $\lbrak \coreLie,\isoH(\coreLie)\rbrak$.
\end{abstract}

\maketitle

\section{Introduction} The Heisenberg-Weyl algebra, because of its ubiquity and profundity, is said to have become the hallmark of noncommutativity in quantum theory \cite{bla08}. Motivated by the creation and annihilation operators in the traditional quantum harmonic oscillator, the Heisenberg-Weyl algebra is generated by two elements $A$ and $B$ that satisfy the canonical commutation relation $AB-BA=1$, which implies that, in the usual Hilbert space formulation of quantum mechanical systems, $A$ and $B$ may be represented by unbounded Hilbert space operators. See, for instance, \cite[Example 11.4-1]{kre89}. Nonetheless, virtually all correspondence schemes for representations of physical quantities in the Hilbert space formulation of quantum theory are said to be endowed with the Heisenberg-Weyl algebra structure \cite{bla08}. 

An approximation to the commutation relation $AB-BA=1$ was first proposed by Arik and Coon \cite{ari76}. The new commutation relation is $AB-qBA=1$, where the parameter $q$ is selected from an appropriate space such that the limit process as $q\rightarrow 1$ may be carried out. This new commutation relation resulted to bounded operator representations of $A$ and $B$ \cite[p. 524]{ari76}. The new model has been successfully applied to several fields including particle physics, knot theory and general relativity \cite[Chapter 12]{ern12}. To mention one specific example, there was a study \cite{mon96} on a Helium isotope in which a model based on the commutation relation $AB-qBA=1$ was compared with the available experimental data, and the computed spectrum reproduces the experimental one within less than 5\% discrepancy \cite[p. 1100]{mon96}.

Thus, the Heisenberg-Weyl algebra now belongs to a family of algebras $\HWeyl_q$ generated by two elements $A$, $B$ subject to the relation $AB-qBA=1$. We call $\HWeyl_q$ the \emph{$q$-deformed Heisenberg algebra}. We mention here two perspectives on the combinatorial algebra of $q$-deformed Heisenberg algebras, that have appeared in the literature. 

The first is in terms of algebraic term rewriting\footnote{16S15 in the 2020 MSC} \cite{hel00,hel02,hel05}. In these studies, the focus was on the rewriting system or reduction system induced by the relation $AB-qBA=1$ that are used to arrive at the traditional ``normal form'' for a given $F\in\HWeyl$, which in this case, is when $F$ has been expressed as a linear combination of words $B^mA^n$ where $m$, $n$ are nonnegative integers. This reduction system and the corresponding normal form were used in \cite{hel00} to study centralizers of elements of $\HWeyl_q$ and the algebraic dependence of commuting elements, while in \cite{hel05}, the structure of two-sided ideals of $\HWeyl_q$ was studied using deformed commutator mappings. In \cite{hel02}, the generalization of $\HWeyl$ into $\HWeyl_q$ was an important running example on how the Diamond Lemma for Ring Theory \cite{ber78} was generalized from its usual ring-theoretic scope into classes of power series algebras. 

The second perspective on the study of $q$-deformed Heisenberg algebras concerns a nonassociative structure, or more precisely, a Lie algebra structure, induced by the operation $\HWeyl_q\times\HWeyl_q\into\HWeyl_q$ given by the the usual commutator $\lpar F,G\rpar\mapsto FG-GF$ \cite{can19,can20a,can20b,can21}. One main theorem about this is that, if $q\neq 1$, then the Lie subalgebra of $\HWeyl_q$ generated by $A$, $B$ consists of all linear combinations of $A$, $B$, $B^mA^n$, where $mn\neq 0$. The determination of such Lie subalgebra is said to be the solution to the \emph{Lie polynomial characterization problem} \cite{can20c} for $\HWeyl$ under the usual generators and relation. An algebraic solution to this was done in \cite{can19} when $q$ is not a root of unity, and in \cite{can20b} when $q\neq 1$ is a root of unity. Alternatively, if $q$ is in the real interval $(0,1)$, then an operator-theoretic solution was made in \cite{can20a}. The methods in \cite{can19} were also used in \cite{can21} for a central extension of the algebra $\HWeyl_q$.

Let us now consider some concrete examples. As mentioned earlier, the associative algebra $\HWeyl_q$ may be turned into a Lie algebra with Lie bracket $\lbrak F,G\rbrak = FG-GF$ for any $F,G\in\HWeyl$. Consider the elements
\begin{eqnarray}
X & = &\lbrak\lbrak B,A\rbrak,\lbrak\lbrak B,A\rbrak,A\rbrak\rbrak,\nonumber\\
Y & = &  \lbrak B,\lbrak\lbrak B,\lbrak B,A\rbrak\rbrak,\lbrak B,A\rbrak\rbrak\rbrak,\nonumber\\
Z & = &\lbrak B,\lbrak\lbrak B,A\rbrak,\lbrak\lbrak B,A\rbrak, A\rbrak\rbrak\rbrak,\nonumber
\end{eqnarray}
of $\HWeyl_q$. If $q$ is not a root of unity, then using results from \cite{can19}, $X$ is a linear combination of $B^2A^3$, $BA^2$ and $1$; $Y$ is a linear combination of $B^3A^2$, $B^2A$ and $1$, while $Z$ is a linear combination of $B^3A^3$, $B^2A^2$, $BA$ and $1$. However, it is not possible to express elements like $B^3$, $A^3$, $B^2$, $A^2$ (pure powers of $A$ or $B$ with exponent at least 2) in terms of only Lie algebra operations performed on the generators $A$, $B$. Properties, such as these, of some Lie algebra structure or \emph{Lie structure} on $\HWeyl_q$ has led to some interesting results, one of which is the characterization of compact elements of $\HWeyl_q$ (under some operator norm) leading to a Calkin algebra isomorphic to an algebra of Laurent polynomials in one variable \cite{can20a}. 

However, when we take the limit as $q\rightarrow 1$, in the Heisenberg-Weyl algebra $\HWeyl=\HWeyl_1$, the aforementioned Lie structure reduces the linear span of $1$, $A$ and $B$. But still, there is more to the Lie algebra $\HWeyl$ than just the elements $c_1\cdot 1+c_2 A+c_3B$ for all scalars $c_1,c_2,c_2$. \emph{This is the starting point of our inquiry.}  If the Lie structure of the Heisenberg-Weyl algebra cannot be   studied by solving a Lie polynomial characterization problem (because the solution is almost trivial), then how can the rest of the Lie algebra $\HWeyl$ (outside the span of $1$, $A$, $B$) be decribed? 

In this work, we answer this question by expressing the Lie algebra $\HWeyl$ as the sum of three vector subspaces. The first summand is some Lie subalgebra $\coreLie$ (to be defined in Section~\ref{ConcreteSec}), with the second summand being the image of $\coreLie$ under some algebra isomorphism $\isoH$, and the third summand is $\lbrak \coreLie,\isoH(\coreLie)\rbrak$. If the Lie subalgebra $\coreLie$ is of such importance in elucidating the Lie structure of $\HWeyl$, then we naturally want to know more about it. What we did in this work is to give a presentation for $\coreLie$ by generators and relations. To do this, we first give, in Section~\ref{freeSec}, a treatment of selected aspects of the theory of Lyndon-Shirshov words, and of the role they play in the basis theory for free Lie algebras.

For the sake of completeness, we mention here some studies on the Lie structure of some classes of associative algebras \cite{mon17,ril93}, and of a certain special product of associative algebras \cite{sic17}. These studies are focused on necessary and sufficient conditions for nilpotency or solvability of the desired Lie algebras in a field with nonzero characteristic. These studies also involve results that are in the framework of polynomial identity algebras. Although we shall be showing the non-nilpotence and solvability of the Lie algebra $\coreLie$, which was mentioned earlier as our key in describing the Lie structure of $\HWeyl$, the subject of this work differs significantly from the said approach in \cite{mon17,ril93,sic17}. In a later result, we will specifically require the underlying field to have zero characteristic; otherwise, some trivialities will be introduced in the Lie structure of $\coreLie$, and hence of $\HWeyl$. Also, instead of focusing on polynomial identities, we delve deeper into the combinatorial algebra of Lyndon-Shirshov words, and the properties of the free Lie algebra basis that can be derived from them.

\section{Preliminaries} If the two-element set is denoted by $\{\genI,\genII\}$, then we shall refer to $\genI$ and $\genII$ as \emph{words of length $1$}. If, for some positive integer $n$, all words of length strictly less than $n$ have been defined, then by a \emph{word of length $n$}, we mean any juxtaposition of the form $\aWord_1\aWord_2$, where one of $\aWord_1$ or $\aWord_2$ is a word of length $n-1$, and the other is a word of length $1$. If $\aWord$ is a word of length $n$, but mention of the positive integer $n$ is not relevant in the current context, then we simply refer to $\aWord$ as a \emph{word}, or a \emph{word on $\{\genI,\genII\}$}, or a \emph{word on $\genI$, $\genII$}. Conversely, if $\aWord$ is referred to as a word, then it is assumed that $\aWord$ is a word of length $n$ for some positive integer $n$. In such a case, we define $n$ as the \emph{length} of $\aWord$, which we denote by $|\aWord|$. Any word $\aWord$ may be written as $\aWord=X_1X_2\cdots X_n$, where, for each $k\in\{1,2,\ldots,n\}$, we have $X_k\in\{\genI,\genII\}$. Induction on $n$ may be used to prove that $|\aWord|=n$. Suppose $\aWord'=Y_1Y_2\cdots Y_m$ is also a word with $Y_k\in\{\genI,\genII\}$ for any $k\in\{1,2,\ldots,m\}$. Again, by induction, $m=|\aWord'|$. \emph{Equality of words}, or in this case, $\aWord=\aWord'$, is defined by the conditions $|\aWord|=|\aWord'|$, and $X_k=Y_k$ for any $k\in\{1,2,\ldots,n\}$. By the \emph{support} of $\aWord$, we mean the set $\mathcal{X}\subseteq\{\genI,\genII\}$ such that for each $k\in\{1,2,\ldots,n\}$, $X_k\in\mathcal{X}$. We define the \emph{empty word} or \emph{word of length $0$} as the word with empty support. We use the symbol $1$ to denote the empty word, and we define $|1|:=0$. A word $W$ is \emph{nonempty} if $W\neq 1$. Given a positive integer $t$, the word $\aWord^t$ is the juxtaposition of $\aWord$ with itself so that $\aWord$ appears in the juxtaposition $t$ times, just like the exponentiation in elementary number systems. We interpret $\aWord^0$ as the empty word. For each $n\in\N:=\{0,1,2\ldots\}$, let $\allWords_n$ be the set of all words of length $n$. The set of all words on $\{\genI,\genII\}$, which is $\allWords:=\bigcup_{n\in\N}\allWords_n$, may easily be shown to be a noncommutative monoid under the operation of juxtaposition of words, with the empty word as (multiplicative) identity. 

Let $\F$ be a field. We assume that any $\F$-algebra to be mentioned is unital and associative. Since we shall not be considering any set of scalars other than $\F$, we further drop the prefix ``$\F$-'' and so we shall simply use the term ``algebra.''  Let $\freeAlg$ be the free algebra generated by the two-element set $\{\genI,\genII\}$. The words on $\{\genI,\genII\}$ form a basis for $\freeAlg$, as a vector space over $\F$. We assume that no confusion shall arise in using the same symbol for the empty word and the multiplicative identity of the field $\F$.  

The \emph{Heisenberg-Weyl algebra} is the algebra $\HWeyl$ generated by two elements $A$, $B$ satisfying the relation $AB=BA+1$. By the universal property of the free algebra $\freeAlg$, $\HWeyl$ is isomorphic to some quotient of $\freeAlg$. More precisely, if $\idealH$ is the (two-sided) ideal of $\freeAlg$ generated by $-\genI\genII+\genII\genI+1$, then $\HWeyl$ is isomorphic to $\freeAlg/\idealH$. 

Throughout, if a vector space basis is known for an algebra or Lie algebra $\mathcal{A}$, then this basis is understood to be a Hamel basis. That is, regardless of whether $\mathcal{A}$ is infinite dimensional or not, $\mathcal{A}$ is viewed as the set of all finite linear combinations of said basis elements. Some facts about a traditional basis for the Heisenberg-Weyl algebra are discussed at the beginning of Section~\ref{ConcreteSec}. 

\subsection{Nested adjoint maps}\label{admapSec} Any algebra $\anyAlg$ is a Lie algebra under the operation $\anyAlg\times\anyAlg\into\anyAlg$ given by\linebreak $(X,Y)\mapsto \lbrak X,Y\rbrak:=XY-YX$. Given $X\in\anyAlg$, the \emph{adjoint map} $\ad X$ is the linear map $\anyAlg\into\anyAlg$ given by $Y\mapsto \lbrak X,Y\rbrak$. 
The adjoint map gives a convenient notation for nested Lie brackets that is precise, and with no need for vague use of expressions like ``$n$ times,'' or ``$n$ copies.'' For instance, 
\begin{eqnarray}
 \lbrak X,\lbrak X,\lbrak X,\lbrak X,\lbrak X, Y\rbrak \rbrak \rbrak\rbrak\rbrak & = & (\ad X)^5(Y),\nonumber\\
\lbrak\lbrak\lbrak X,Y\rbrak , Y\rbrak , Y\rbrak & = & (-\ad Y)^3(X),\nonumber\\
 \lbrak X,\lbrak X,\lbrak X,\lbrak X,\lbrak X, \lbrak\lbrak\lbrak X,Y\rbrak , Y\rbrak , Y\rbrak \rbrak \rbrak \rbrak\rbrak\rbrak & = & (\ad X)^5(-\ad Y)^3(X),\nonumber\\
\lbrak\lbrak\lbrak  \lbrak X,\lbrak X,\lbrak X,\lbrak X,\lbrak X, Y\rbrak \rbrak \rbrak\rbrak\rbrak,Y\rbrak , Y\rbrak , Y\rbrak & = & (-\ad Y)^3(\ad X)^5(Y),\nonumber
\end{eqnarray}
where juxtaposition and exponentiation of adjoint maps refer to function composition. Given $m,n\in\N$, the reader may infer the meaning of generalized nested Lie brackets like $(\ad X)^m(Y)$, $(-\ad Y)^m(X)$, $(\ad X)^m(-\ad Y)^n(X)$ or $(-\ad Y)^m(\ad X)^n(Y)$, and perhaps these examples may show the advantage of the ``adjoint notation'' for nested Lie brackets.

\section{Nonassociative regular words on two generators}\label{freeSec}

In this section, we give a rigorous treatment of the aspects of the theory of regular words on two generators. These objects were motivated by notions from, and have their crucial significance in, several algebraic theories, mainly the theory of presentation of groups, the so-called ``Fox calculus'' or the free differential calculus, and also the theory of bases for free Lie algebras \cite{che58,shi09a,shi09b}. Some excellent modern expositions are \cite{bok07} and \cite[Sections 2.2, 2.7--2.9]{ufn95}. Our treatment here is mainly based on \cite{ufn95} because of the agreeable perspective in it: the said theoretical developments can be dealt with in principle on the associative level, with the aid of universal enveloping algebras, but it makes sense, however, to do so, not outside the scope of the Lie algebras themselves \cite[p. 37]{ufn95}.

Let $V,W\in\allWords$. We say that $V$ is a \emph{subword} of $W$ if there exist $L,R\in\allWords$ such that $W=LVR$. If $L=1$, then $V$ is a \emph{beginning} of $W$, and is an \emph{ending} of $W$, if $R=1$. A subword $U$ of $W$ is \emph{proper} if $U\neq W$. Suppose $\{U\  :\  \mathscr{P}(U)\}\subseteq\allWords$ for some statement $\mathscr{P}$. A \emph{longest} word with property $\mathscr{P}$ is an element $U'$ of $\{U\  :\  \mathscr{P}(U)\}$ such that for any $U\in \{U\  :\  \mathscr{P}(U)\}$, $|U|\leq |U'|$. Any nonempty word has a unique longest ending, and any word with length at least $2$ has a unique longest proper ending.

Let $n=|VW|=|WV|$, and let $VW=X_1X_2\cdots X_n$, $WV=Y_1Y_2\cdots Y_n$, with\linebreak $X_i,Y_i\in\{\genI,\genII\}$ for any $i\in\{1,2,\ldots,n\}$. If we define $>$ as the ordering on $\{\genI,\genII\}$ given by $\genI>\genII$, then we may extend $>$ to an ordering in $\allWords$ by defining $V>W$ whenever there exists $k\in\{1,2,\ldots,n\}$ such that $X_k>Y_k$ and $i<k$ implies $X_i=Y_i$. If $V>W$ or $V=W$, then we write $V\geq W$.

\begin{definition} A nonempty word $W\in\allWords$ is \emph{regular} if for any proper beginning $L$ and any proper ending $R$ of $W$ such that $W=LR$, we have $L>R$.
\end{definition}

The notion of a regular word is one of the fundamental cornerstones of the basis theory of free Lie algebras. We shall gradually introduce properties of regular words according to what shall be relevant to the Lie structure of the Heisenberg-Weyl algebra. First, we have the following property which is useful in constructing some regular words longer than the generators $\genI$ and $\genII$. 

\begin{proposition}[{\cite[Theorem 2.8.1]{ufn95}}]\label{regmultProp} If $W_1$ and $W_2$ are regular words such that \linebreak $W_1>W_2$, then $W_1W_2$ is regular.
\end{proposition}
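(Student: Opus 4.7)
The plan is to fix an arbitrary proper factorization $W_1W_2=LR$ with $L,R$ nonempty and show $L>R$, by cases on where the cut sits relative to the boundary between $W_1$ and $W_2$. The case of the cut lying exactly at the boundary, so $L=W_1$ and $R=W_2$, is the hypothesis. When the cut lies inside $W_1$ we have $W_1=LR_1$ with $R_1$ a nonempty proper ending and $R=R_1W_2$; regularity of $W_1$ then supplies $L>R_1$, and the goal is to upgrade this to $L>R_1W_2$. When the cut lies inside $W_2$ we have $W_2=L_2R$ with $L_2$ a nonempty proper beginning and $L=W_1L_2$; regularity of $W_2$ supplies $L_2>R$ and hence $W_2>R$, so by hypothesis and transitivity of $>$ one has $W_1>R$, which we must upgrade to $W_1L_2>R$.

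Both upgrades rest on a direct reading of the definition of $>$ via the first differing position. Consider for instance the upgrade $L>R_1 \Rightarrow L>R_1W_2$: the strings $L\cdot R_1W_2$ and $R_1W_2\cdot L$ agree with $LR_1$ and $R_1L$ on positions $1$ through $\min(|L|,|R_1|)$, because appending $W_2$ to $R_1$ on the right affects no character of index $\leq |R_1|$, and similarly on the $L$ side. So if the first differing position of $LR_1$ versus $R_1L$ sits within those first $\min(|L|,|R_1|)$ positions, it is also the first differing position of $L\cdot R_1W_2$ versus $R_1W_2\cdot L$, with the same sign, giving $L>R_1W_2$. An analogous argument handles the case where the cut lies inside $W_2$, with $W_1$ now extending $L_2$ on the front and $R$ playing the role of the short comparandum.

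The main obstacle is the subcase of each nontrivial case in which one of the two compared factors is a proper beginning of the other, so the decision of $>$ is forced past position $\min(|L|,|R_1|)$ (respectively $\min(|W_1|,|R|)$) and the appended material really does enter the comparison. My plan here is to peel off the common beginning, which translates the regularity inequality into an inequality between strictly shorter words of the same flavour as the original, and then to close the argument by induction on $|W_1|+|W_2|$. The base case $|W_1|=|W_2|=1$ forces $W_1=\genI$ and $W_2=\genII$, and $\genI\genII$ is trivially regular since its sole proper factorization is $\genI\cdot\genII$ with $\genI>\genII$; each peeling step strictly reduces the length of at least one of the two words being compared, so the descent is well-founded and the induction goes through.
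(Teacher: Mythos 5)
The paper itself does not prove this proposition --- it is quoted from the literature --- so your argument has to stand on its own. The case split on where the cut $W_1W_2=LR$ falls is the right skeleton, and your two easy observations are correct: the boundary cut is exactly the hypothesis, and appending material beyond the first differing position of a comparison does not disturb that comparison. But the entire difficulty of the proposition sits in the subcase you defer to your last paragraph, and the mechanism you propose there does not close. Take the cut inside $W_1$, say $W_1=LR_1$ and $R=R_1W_2$: the comparison of $LR_1W_2$ with $R_1W_2L$ is decided at the first differing position $k$ of $LR_1$ versus $R_1L$ provided $k\leq |R_1|$ (not $\min(|L|,|R_1|)$; the left-hand word is unperturbed through position $|L|+|R_1|$). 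If $k>|R_1|$, then the first $|R_1|$ letters of $W_1=LR_1$ coincide with $R_1$, so $R_1$ is simultaneously a beginning and a proper ending of $W_1$. ``Peeling off'' that common beginning leaves you comparing words that are not regular words, so an induction whose hypothesis is the proposition itself --- a statement about products of two \emph{regular} words --- has nothing to apply to; you never exhibit the shorter instance of the proposition that the inductive step is supposed to consume, and I do not see one.

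What is actually needed is the lemma that a regular word is \emph{unbordered}: no nonempty proper ending of a regular word is also a beginning of it. (Sketch: if $W=LR=RM$ with $R$ a nonempty proper border, write $>_{\mathrm{lex}}$ for the first-differing-position comparison of equal-length words, so that $U>V$ in the paper's order means $UV>_{\mathrm{lex}}VU$. Regularity applied to $W=L\cdot R$ gives $LR>_{\mathrm{lex}}RL$, i.e.\ $RM>_{\mathrm{lex}}RL$, i.e.\ $M>_{\mathrm{lex}}L$; regularity applied to $W=R\cdot M$ gives $RM>_{\mathrm{lex}}MR$, i.e.\ $LR>_{\mathrm{lex}}MR$, i.e.\ $L>_{\mathrm{lex}}M$ --- a contradiction.) With this lemma your bad subcase for the cut inside $W_1$ simply cannot occur, and no induction is needed. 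For the cut inside $W_2$ (where $W_2=L_2R$ and $L=W_1L_2$), your detour through $W_1>R$ and transitivity (which you also invoke without proof) runs into the same prefix problem when you try to upgrade to $W_1L_2>R$; it is cleaner to chain the two comparisons $W_1W_2>_{\mathrm{lex}}W_2W_1$ and ``$W_2\cdots>_{\mathrm{lex}}R\cdots$'' directly, observing that unborderedness of $W_2$ forces the second comparison to be decided within the first $|R|$ letters, so the first differing position of $W_1W_2$ versus $RW_1L_2$ is the smaller of the two witnesses and has the right sign. As written, your proposal establishes the easy subcases and correctly isolates the hard one, but does not prove it.
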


We shall be concerned with specific types of regular words that are summarized in the following.

\begin{example}\label{Illus1Ex} Proposition~\ref{regmultProp} has the following consequences, the proofs of which are routine.
\begin{enumerate}\item\label{mnform} By induction, the word $\genI^m\genII^n$ is regular for any $m,n\in\N\setdiff\{0\}$ .
\item\label{mnmnform} Given positive integers $h,m,n$, the word $\genI^h\genII^m\genI^h\genII^n$ is regular if and only if $m<n$.
\item\label{hkform} Given positive integers $h,k,m,n$, the word $\genI^{h+k}\genII^m\genI^h\genII^n$ is regular, but $\genI^{h}\genII^m\genI^{h+k}\genII^n$ is not.
\item\label{genregword} Given positive integers $m_1,n_1,m_2,n_2,\ldots,m_k,n_k$, if, for any index\linebreak $i\in\{2,3,\ldots,k\}$, we have $m_1> m_i$, then $\genI^{m_1}\genII^{n_1}\genI^{m_2}\genII^{n_2}\cdots \genI^{m_k}\genII^{n_k}$ is regular.
\item\label{genregword2} Let $m_1,n_1,m_2,n_2,\ldots,m_k,n_k$ be positive integers such that\linebreak $m_1=\max\{m_i\  :\  i\in\{2,3,\ldots,k\}\}$. Let $s\in\{2,3,\ldots,k\}$ such that
\begin{eqnarray}
m_s & = &\max\{m_i\  :\  i\in\{2,3,\ldots,k\}\},\nonumber\\
m_s & > & m_i,\qquad\mbox{if } i\in\{2,3,\ldots, s-1\}.\nonumber
\end{eqnarray}
That is, the biggest value among $m_2,m_3,\ldots,m_k$ has its first occurrence at index $s$. If $n_1<n_s$, then $\genI^{m_1}\genII^{n_1}\genI^{m_2}\genII^{n_2}\cdots \genI^{m_k}\genII^{n_k}$ is regular.  If $n_1>n_s$, then $\genI^{m_1}\genII^{n_1}\genI^{m_2}\genII^{n_2}\cdots \genI^{m_k}\genII^{n_k}$ is NOT regular.
\end{enumerate}
\end{example}

\begin{example}\label{powernotregEx} Given an integer $n\geq 2$, if $L$ is a proper beginning of the word $\genII^n$, then $L=\genII^i$ for some positive integer $i<n$. The proper ending $R$ of $\genII^n$ such that $\genII^n=LR$ is $R=\genII^{n-i}$. Thus, $LR = \genII^n = RL$, and so, $L{\not >}R$. This means that $\genII^n$ is not regular.
\end{example}

\subsection{Regular factoring} Perhaps the most important property of regular words is the existence of one unique way to ``factor'' a regular word such that the subwords in the ``factoring'' are also regular, and the ``factorization'' process may be continued on these factors repeatedly, and the process terminates until all factors are generators. This feature of regular words makes them of extreme significance to a particular nonassociative structure in $\freeAlg$ that we will be using later.

\begin{lemma}[{\cite[Theorem 2.8.3(b)]{ufn95}}]\label{regLem} If $W$ is a regular word, if $R$ is the longest regular proper ending of $W$, and if $L$ is the proper beginning of $W$ such that $W=LR$, then $L$ is regular. The uniqueness of $R$ implies the uniqueness of the pair $(L,R)$, which we henceforth refer to as the \emph{regular factoring} of $W$. In symbols, we write this as $W=L\regfac R$.
\end{lemma}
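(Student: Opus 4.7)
My plan is to settle existence/uniqueness as a preliminary, then tackle the regularity of $L$ by contradiction, exploiting the maximality of $R$. For existence and uniqueness, I first observe that the statement is vacuous when $|W| = 1$, since $W$ admits no proper factoring; assume $|W| \geq 2$. The last letter of $W$ is a length-$1$ proper ending and is regular (single letters are trivially regular), so the family of regular proper endings of $W$ is nonempty. Two proper endings of the same length coincide, so the longest regular proper ending $R$ is unique, and $L$ is then uniquely determined by the requirement $W = LR$. This yields uniqueness of the pair $(L, R)$.

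For the regularity of $L$, I argue by contradiction. Assume $L$ is not regular. Since any single letter is regular, $|L| \geq 2$, and by non-regularity there exist nonempty $L_1, R_1$ with $L = L_1 R_1$ and $L_1 \leq R_1$ in the juxtaposition-lex order. The strategy is to use this failing factoring to exhibit a regular proper ending of $W$ strictly longer than $R$, violating maximality. The natural candidate is $R_1 R$, which is a proper ending of $W$ (since $L_1$ is nonempty) of length greater than $|R|$ (since $R_1$ is nonempty). I would refine the choice of $L_1, R_1$ by selecting, among all failing factorings of $L$, one maximising $|R_1|$; a short combinatorial check verifies that for this maximal choice $R_1$ is itself regular, since any nontrivial regular-violating split of $R_1$ could be absorbed to produce a longer failing factoring of $L$.

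With $R_1$ and $R$ regular, Proposition~\ref{regmultProp} reduces the problem to proving the key order relation $R_1 > R$. I would extract this from two facts: (a) the regularity of $W$ applied to the factoring $W = L_1(R_1 R)$ gives $L_1 > R_1 R$, i.e.\ $L_1 R_1 R > R_1 R L_1$ in lex on equal-length juxtapositions; and (b) the assumption gives $L_1 \leq R_1$, i.e.\ $L_1 R_1 \leq R_1 L_1$ in lex. The main obstacle is combining these to deduce $R_1 R > R R_1$ (which is $R_1 > R$). The argument proceeds by a case analysis on the relative lengths of $L_1$ and $R_1$: in the cleanest case $L_1 = R_1$, cancelling the common prefix $L_1$ in $L_1 R_1 R > R_1 R L_1$ yields $R_1 R > R R_1$ at once, while the remaining cases (where one of $L_1, R_1$ is a proper prefix of the other) are handled using the classical identity that $UV = VU$ forces $U$ and $V$ to be powers of a common word, reducing these situations to the easier case. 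Once $R_1 > R$ is secured, Proposition~\ref{regmultProp} yields regularity of $R_1 R$, contradicting the maximality of $R$ and completing the proof.
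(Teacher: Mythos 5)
The paper does not actually prove this lemma; it imports it verbatim from \cite[Theorem 2.8.3(b)]{ufn95}, so your attempt can only be judged on its own terms. Your overall strategy --- assume $L$ is not regular, extract a failing factoring $L=L_1R_1$, and contradict the maximality of $R$ by showing $R_1R$ is a regular proper ending longer than $R$ --- is the standard line of attack, and your existence/uniqueness preliminary is fine. But the two load-bearing steps both have genuine gaps. First, the claim that the failing factoring with $|R_1|$ maximal has $R_1$ regular is not established by your "absorption" remark, and is in fact false for a general non-regular word: take $L=\genI\genII\genI\genI$. Its failing factorings are $(\genI\genII,\genI\genI)$ and $(\genI\genII\genI,\genI)$, so the maximal choice is $R_1=\genI^2$, which is not regular. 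The structural problem with your justification is that a regularity-violating split $R_1=UV$ only produces candidate suffixes $V$ of $L$ that are \emph{shorter} than $R_1$, so it can never contradict the maximality of $|R_1|$; the argument is pointed in the wrong direction. (This particular $L$ cannot occur as a beginning of a regular word, but your "short combinatorial check" invokes nothing about $L$ beyond its non-regularity, so as written it cannot go through; any repair must genuinely use that $L$ is a prefix of the regular word $W$, or must work with the suffix characterization of regular words instead.)

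Second, the deduction of $R_1>R$ from (a) $L_1>R_1R$ and (b) $L_1\not>R_1$ is only completed in the sub-case $L_1=R_1$. You correctly observe that (a) and (b) force one of $L_1,R_1$ to be a prefix of the other, but the identity "$UV=VU$ implies $U,V$ are powers of a common word" only disposes of the boundary sub-case $L_1R_1=R_1L_1$; it says nothing about the sub-case where, say, $L_1$ is a proper prefix of $R_1$ and $L_1R_1$ is strictly lexicographically smaller than $R_1L_1$ (e.g.\ $L_1=\genI\genII$, $R_1=\genI\genII\genI$), which is non-vacuous. What is really needed there is the transitivity-type property of the relation $U\succ V\iff UV>VU$ (cleanest via comparing the infinite powers $U^{\omega}$ and $V^{\omega}$): from $L_1\succ R_1R$ and $R_1\succeq L_1$ one gets $R_1\succ R_1R$, and cancelling the prefix $R_1$ gives $R_1>R$. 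That property is true but is itself a non-trivial lemma about this order, and neither the paper nor your sketch supplies it. Both gaps are fixable, but as written the proof is incomplete.
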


\begin{example}\label{Illus2Ex} The regular words mentioned in Example~\ref{Illus1Ex} have the following regular factorings.
\begin{enumerate}\item\label{firstform} For the case $m=1$ in Example~\ref{Illus1Ex}\ref{mnform}, for any positive integer $n$, we have $\genI\genII^n=\genI\genII^{n-1}\regfac\genII$.
\item\label{firstform2} For any positive integers $m,n$, $\genI^m\genII^n=\genI\regfac\genI^{m-1}\genII^n$.
\item\label{secondform} Given positive integers $h,m,n$, with $m<n$, $\genI^h\genII^m\genI^h\genII^n=\genI^h\genII^m\regfac\genI^h\genII^n$.
\item\label{secondform2} Given positive integers $h,k,m,n$, $\genI^{h+k}\genII^m\genI^h\genII^n=\genI\  \regfac\  \genI^{h+k-1}\genII^m\genI^h\genII^n$.
\item\label{genregword3} Given positive integers $m_1,n_1,m_2,n_2,\ldots,m_k,n_k$, if, for any index\linebreak $i\in\{2,3,\ldots,k\}$, we have $m_1-1> m_i$, then 
\begin{eqnarray}
\genI^{m_1}\genII^{n_1}\genI^{m_2}\genII^{n_2}\cdots \genI^{m_k}\genII^{n_k}=\genI\regfac\genI^{m_1-1}\genII^{n_1}\genI^{m_2}\genII^{n_2}\cdots \genI^{m_k}\genII^{n_k}.\label{genregwordEQ3}
\end{eqnarray}
\end{enumerate}
\end{example}

Since any nonempty proper ending of $\genI\genII^n$ is $\beta^i$ for some positive integer $i$, as shown in Example~\ref{powernotregEx}, none of these nonempty proper endings is regular except for $\genII$ itself. This explains the regular factoring $\genI\genII^n=\genI\genII^{n-1}\regfac\genII$ in Example~\ref{Illus2Ex}\ref{firstform}. The longest proper ending of the regular word $\genI^2\genII^n$ is $\genI\genII^n$, which is already regular. Thus, $\genI^2\genII^n=\genI\regfac\genI\genII^n$, and this may be extended by induction. The result is Example~\ref{Illus2Ex}\ref{firstform2}. As for Example~\ref{Illus2Ex}\ref{secondform}, a routine argument may be used to show that any proper ending of $\genI^h\genII^m\genI^h\genII^n$ longer than $\genI^h\genII^n$ is not regular. Example~\ref{Illus2Ex}\ref{secondform} may be extended to Example~\ref{Illus2Ex}\ref{secondform2}--\ref{genregword3} in the same manner as how Example~\ref{Illus2Ex}\ref{firstform} was extended to Example~\ref{Illus2Ex}\ref{firstform2}. 

\subsection{Regular bracketing} The free algebra $\freeAlg$ is a Lie algebra under the Lie bracket 
\begin{eqnarray}
(f,g)\mapsto \lbrak f,g\rbrak:=fg-gf,\nonumber
\end{eqnarray}
and we have the following construction of elements of the Lie algebra $\freeAlg$ where the ``nesting of Lie brackets'' is elegantly encoded in the structure of regular words.

\begin{definition} Define $\lreg\genI\rreg := \genI$ and $\lreg\genII\rreg := \genII$. If $W$ is a regular word with length at least $2$ and if, using Lemma~\ref{regLem}, $W=L\regfac R$, then $\lreg W\rreg:=\lbrak \lreg L\rreg,\lreg R\rreg\rbrak$. We shall refer to $\lreg W\rreg$ as the \emph{regular bracketing} of $W$. We say that $f\in\freeAlg$ is a \emph{nonassociative regular word (on $\genI$, $\genII$)} if there exists a regular word $W$ such that $f=\lreg W\rreg$.
\end{definition}

\begin{example}\label{Illus3Ex}\begin{enumerate}\item\label{firstformBrak} Using Example~\ref{Illus2Ex}\ref{firstform}--\ref{firstform2} and induction, for any $m,n\in\N$ with $m\geq 1$, we have 
\begin{eqnarray}
\lreg\genI^m\genII^n\rreg=(\ad\genI)^{m-1}(-\ad\genII)^n(\genI).\nonumber
\end{eqnarray}
[See Section~\ref{admapSec} for explanatory remarks on the use of nested adjoint maps.]
\item\label{secondformBrak} As a consequence of \ref{firstformBrak} above, and also of Example~\ref{Illus2Ex}\ref{secondform}, given positive integers $h,m,n$ with $m<n$,
\begin{eqnarray}
\lreg\genI^h\genII^m\genI^h\genII^n\rreg=\lbrak(\ad\genI)^{h-1}(-\ad\genII)^m(\genI),(\ad\genI)^{h-1}(-\ad\genII)^n(\genI)\rbrak.\nonumber
\end{eqnarray}
\item\label{secondformBrak2} Using Example~\ref{Illus2Ex}\ref{secondform2}, and induction, given positive integers $h,k,m,n$ with $m<n$,
{\scriptsize\begin{eqnarray}
\lreg\genI^{h+k}\genII^m\genI^h\genII^n\rreg=(\ad\genI)^{k}\lpar\lbrak(\ad\genI)^{h-1}(-\ad\genII)^m(\genI),(\ad\genI)^{h-1}(-\ad\genII)^n(\genI)\rbrak\rpar.\nonumber
\end{eqnarray}}
\item\label{secondformBrak3} Using Example~\ref{Illus2Ex}\ref{secondform2}, and induction,  given positive integers $h,k,m,n$ with $m\geq n$,
{\scriptsize\begin{eqnarray}
\lreg\genI^{h+k}\genII^m\genI^h\genII^n\rreg=(\ad\genI)^{k-1}\lpar\lbrak(\ad\genI)^{h}(-\ad\genII)^m(\genI),(\ad\genI)^{h-1}(-\ad\genII)^n(\genI)\rbrak\rpar.\nonumber
\end{eqnarray}}
\item\label{genformBrak} We may generalize or combine \ref{secondformBrak}--\ref{secondformBrak3} above, and make some substitutions using \ref{firstformBrak}, to obtain
\begin{eqnarray}
\lreg\genI^{h+k}\genII^m\genI^h\genII^n\rreg=\begin{cases}\lpar\ad\genI\rpar^k\lbrak\lreg\genI^h\genII^m\rreg,\lreg\genI^h\genII^n\rreg\rbrak, &\mbox{if } m<n,\\
\lpar\ad\genI\rpar^{k-1}\lbrak\lreg\genI^{h+1}\genII^m\rreg,\lreg\genI^h\genII^n\rreg\rbrak, &\mbox{if }k\geq 1,\  m\geq n.\end{cases}\nonumber
\end{eqnarray}
The case $k=0$ with $m\geq n$ is not included because in such a case, according to Example~\ref{Illus1Ex}\ref{mnmnform}, $\genI^{h+k}\genII^m\genI^h\genII^n$ is not regular.
\end{enumerate}
\end{example}

We give a few remarks on how Example~\ref{Illus3Ex}\ref{secondformBrak} has been generalized into\linebreak  Example~\ref{Illus3Ex}\ref{secondformBrak2}--\ref{secondformBrak3}. Consider the word $\genI^{h+1}\genII^m\genI^h\genII^{n}$, where $h$, $m$, $n$ are positive. Proposition~\ref{regmultProp} and Example~\ref{Illus1Ex}\ref{firstform} may be used to show that $\genI^{h+1}\genII^m\genI^h\genII^{n}$ is regular. This is regardless of which of $m$ or $n$ is bigger. If $m<n$, then the longest proper ending of $\genI^{h+1}\genII^m\genI^h\genII^{n}$ is $\genI^{h}\genII^m\genI^h\genII^{n}$, which, by Example~\ref{Illus1Ex}\ref{mnmnform} is already regular. Thus, 
\begin{eqnarray}
\lreg\genI^{h+1}\genII^m\genI^h\genII^{n}\rreg & = & \lreg\genI\regfac\genI^{h}\genII^m\genI^h\genII^{n}\rreg,\nonumber\\
& = & \lbrak\lreg\genI\rreg,\lreg\genI^{h}\genII^m\genI^h\genII^{n}\rreg\rbrak,\nonumber\\
& = & \lbrak\genI,\lbrak(\ad\genI)^{h-1}(-\ad\genII)^m(\genI),(\ad\genI)^{h-1}(-\ad\genII)^n(\genI)\rbrak\rbrak,\nonumber\\
& = &(\ad\genI)\lpar\lbrak(\ad\genI)^{h-1}(-\ad\genII)^m(\genI),(\ad\genI)^{h-1}(-\ad\genII)^n(\genI)\rbrak\rpar,\nonumber
\end{eqnarray}
which is the formula in Example~\ref{Illus3Ex}\ref{secondformBrak2} at $k=1$. This may be extended to an arbitrary positive integer $k$ by induction. For the case $m\geq n$, a routine argument may be used to show that any proper ending of $\genI^{h+1}\genII^m\genI^h\genII^{n}$ longer than $\genI^h\genII^{n}$ is not regular. Thus,
\begin{eqnarray}
\lreg\genI^{h+1}\genII^m\genI^h\genII^{n}\rreg & = & \lreg\genI^{h+1}\genII^m\regfac\genI^h\genII^{n}\rreg,\nonumber\\
& = &\lbrak\lreg\genI^{h+1}\genII^m\rreg,\lreg\genI^h\genII^{n}\rreg\rbrak,\nonumber\\
& = &\lbrak(\ad\genI)^{h}(-\ad\genII)^m(\genI),(\ad\genI)^{h-1}(-\ad\genII)^n(\genI)\rbrak,\nonumber
\end{eqnarray}
which is the formula in Example~\ref{Illus3Ex}\ref{secondformBrak3} at $k=1$. At the next value of $k$, the longest proper ending of the word $\genI^{h+2}\genII^m\genI^h\genII^{n}$ is $\genI^{h+1}\genII^m\genI^h\genII^{n}$, which we have established to be regular. This shall result to the formula in Example~\ref{Illus3Ex}\ref{secondformBrak3} at $k=2$. Using Example~\ref{Illus2Ex}\ref{secondform2} and induction, this may be extended to any positive $k\in\N$. 

\subsection{The general regular word on two generators}\label{genregwordSec} After some facts about regular factoring and regular bracketing in the previous subsections, we now consider how these notions may be understood for a regular word (on two generators) of arbitrary length. To this end, we have an important necessary condition for the regularity of a word in the lemma that follows. Also, this gives us a definite form of a regular word on two generators, and we use this form to partition the collection of regular words, which shall be relevant in our main results later.

\begin{lemma}\label{badegLem} If $W\in\allWords$ is nonempty and regular with length at least $2$, then there exist positive integers $m_1,n_1,m_2,n_2,\ldots,m_k,n_k$ such that 
\begin{eqnarray}
W=\genI^{m_1}\genII^{n_1}\genI^{m_2}\genII^{n_2}\cdots \genI^{m_k}\genII^{n_k}.\label{necessW}
\end{eqnarray}
\end{lemma}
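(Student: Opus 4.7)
The plan is to show that any regular word $W$ with $|W|\geq 2$ must begin with $\genI$ and end with $\genII$. Granting these two claims, grouping the letters of $W$ into maximal runs of equal letters and using the resulting alternation immediately yields the form in~\eqref{necessW}, with the first block necessarily consisting of $\genI$'s and the last of $\genII$'s, both of positive length.

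For the first claim, suppose for contradiction that $W$ begins with $\genII$, and let $j$ be the largest positive integer such that $\genII^j$ is a beginning of $W$. If $W=\genII^j$, then $j=|W|\geq 2$, and Example~\ref{powernotregEx} shows $W$ is not regular. Otherwise $W=\genII^j\genI W'$ for some word $W'$; set $L:=\genII^j$ and let $R$ be the proper ending such that $W=LR$. Then $R$ begins with $\genI$ while $L$ begins with $\genII$, so comparing the concatenations $LR$ and $RL$ at the very first letter we find $RL>LR$, i.e.\ $R>L$, contradicting the regularity of $W$.

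For the second claim, write $W=X_1X_2\cdots X_n$ with $X_i\in\{\genI,\genII\}$, and note that the first claim gives $X_1=\genI$. Suppose for contradiction that $X_n=\genI$. If $W=\genI^n$, then the factoring $L:=\genI$, $R:=\genI^{n-1}$ yields $LR=RL$, so $L\not> R$, contradicting regularity. Otherwise $W$ contains at least one $\genII$; let $k$ be the largest positive integer such that $\genI^k$ is a beginning of $W$, so that $X_{k+1}=\genII$ and $1\leq k<n$. Take $L:=X_1\cdots X_{n-1}$ and $R:=X_n=\genI$. Then the first $k+1$ letters of $LR=W$ read $\genI^k\genII$, whereas those of $RL=\genI\cdot L$ read $\genI^{k+1}$. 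The concatenations agree in positions $1,\ldots,k$ but differ at position $k+1$, where $\genI>\genII$ gives $RL>LR$, i.e.\ $R>L$, again contradicting regularity.

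The argument involves no deep step; the only delicate point is the careful bookkeeping of the order relation on $\allWords$, namely the identification, for each chosen factoring $W=LR$, of the first position at which $LR$ and $RL$ differ. Once $W$ is known to begin with $\genI$ and end with $\genII$, the decomposition into maximal monochromatic runs is automatic and supplies the required positive exponents $m_1,n_1,\ldots,m_k,n_k$.
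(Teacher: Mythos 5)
Your proof is correct, but it takes a genuinely different route from the paper's. The paper proves the alternating form by induction on $|W|$ through the regular factoring $W=L\regfac R$ of Lemma~\ref{regLem}: both factors are regular of smaller length, their alternating forms concatenate (with a possible merging of adjacent blocks), and only afterwards is the leading letter shown to be $\genI$ by a lexicographic argument on the factorization $L=\genII^{m_1}$. You instead bypass the regular factoring entirely and argue directly from the definition of regularity: one well-chosen factorization $W=LR$ forces the first letter to be $\genI$ (comparing $LR$ and $RL$ at position $1$), and another, with $R$ equal to the final letter, forces the last letter to be $\genII$ (comparing at position $k+1$, where $\genI^k$ is the maximal $\genI$-prefix); the decomposition into maximal runs then gives \eqref{necessW} automatically, since the run letters alternate and the first and last runs are an $\genI$-run and a $\genII$-run respectively. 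Your argument is checked to be sound, including the edge cases $W=\genII^j$ and $W=\genI^n$ and the verification that $k+1\leq n-1$ so that $L$ really begins with $\genI^k\genII$. What your approach buys is elementarity and self-containment: it uses only the order relation and the definition of a regular word, whereas the paper's proof leans on Lemma~\ref{regLem} and an inductive hypothesis that must be read a little generously when a factor has length $1$. What the paper's approach buys is that the inductive passage through $W=L\regfac R$ sets up the notation and the habits of mind (regular factorings of subwords) that are reused heavily in Section~\ref{genregwordSec} and in the proof of Lemma~\ref{TheBigLem}, so the redundancy there is deliberate.
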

\begin{proof} First, we claim that there exist distinct $\gamma,\delta\in\{\genI,\genII\}$, such that\linebreak $W=\gamma^{m_1}\delta^{n_1}\gamma^{m_2}\delta^{n_2}\cdots \gamma^{m_k}\delta^{n_k}$, and we prove this by induction on $|W|\geq 2$. If $|W|=2$, then the only possible words equal to $W$ are $\genI^2$, $\genI\genII$, $\genII^2$ and $\genII\genI$. The only proper beginning of $\alpha\beta$ is $L=\alpha$ and the proper ending $R$ such that $W=LR$ is $R=\beta$, and we also have $L>R$. Thus, $\alpha\beta$ is regular. For each of the other three cases for $W$, there exists a proper beginning $L'$ and proper ending $R'$ such that $W=L'R'$ but $L'{\not >}R'$. Thus, the only possibility is $W=\alpha\beta$, which satisfies the statement. Suppose that the statement holds for any regular word with length strictly less than $|W|$. If $W=L\regfac R$, then by the inductive hypothesis, there exist distinct $\gamma,\delta\in\{\genI,\genII\}$ such that $L=\gamma^{m_1}\delta^{n_1}\gamma^{m_2}\delta^{n_2}\cdots \gamma^{m_k}\delta^{n_k}$, and also some distinct $\varepsilon,\zeta\in\{\genI,\genII\}$ such that $R=\varepsilon^{m_{k+1}}\zeta^{n_{k+1}}\varepsilon^{m_{k+2}}\zeta^{n_{k+2}}\cdots \varepsilon^{m_{k+\ell}}\zeta^{n_{k+\ell}}$ where all exponents shown are positive. We have either $\varepsilon=\gamma$ or $\varepsilon=\delta$, and these cases imply $\zeta=\delta$ or $\zeta=\gamma$, respectively. Thus,
\begin{eqnarray}
W & = & \gamma^{m_1}\delta^{n_1}\gamma^{m_2}\delta^{n_2}\cdots \gamma^{m_k}\delta^{n_k}\gamma^{m_{k+1}}\delta^{n_{k+1}}\gamma^{m_{k+2}}\delta^{n_{k+2}}\cdots \gamma^{m_{k+\ell}}\delta^{n_{k+\ell}},\  \mbox{or}\nonumber\\
W & = & \gamma^{m_1}\delta^{n_1}\gamma^{m_2}\delta^{n_2}\cdots \gamma^{m_k}\delta^{n_k+m_{k+1}}\gamma^{n_{k+1}}\delta^{m_{k+2}}\gamma^{n_{k+2}}\cdots \delta^{m_{k+\ell}}\gamma^{n_{k+\ell}},\nonumber
\end{eqnarray}
where all the exponents shown are positive. Thus, in any case, $W$ satisfies the desired conditions. This completes the induction proof for the claim. What remains to be shown is $\gamma=\genI$. Suppose otherwise. Then the only possibility is $\gamma=\genII$, and so,\linebreak $W=\genII^{m_1}\genI^{n_1}\genII^{m_2}\genI^{n_2}\cdots \genII^{m_k}\genI^{n_k}$. Consequently, the proper beginning $L=\genII^m_1$ and proper ending $R=\genI^{n_1}\genII^{m_2}\genI^{n_2}\cdots \genII^{m_k}\genI^{n_k}$ of $W$ have the property that $W=LR$ but since $m_1$ and $n_1$ are positive, we may write
\begin{eqnarray}
LR & = & \genII\cdot\genII^{m_1-1}\genI^{n_1}\genII^{m_2}\genI^{n_2}\cdots \genII^{m_k}\genI^{n_k},\nonumber\\
RL & = & \genI\cdot\genI^{n_1-1}\genII^{m_2}\genI^{n_2}\cdots \genII^{m_k}\genI^{n_k}\genII^{m_1},\nonumber
\end{eqnarray}
and paying attention to the generator at the left-most positions in these words, $\genII{\not >}\genI$, and so $LR{\not>}RL$. This contradicts the regularity of $W$. Therefore, $\gamma=\genI$, and\linebreak $W=\genI^{m_1}\genII^{n_1}\genI^{m_2}\genII^{n_2}\cdots \genI^{m_k}\genII^{n_k}$.
\end{proof}

Given words $W$ and $V$, the number of times $V$ occurs as a subword of $W$ is denoted by $\deg_VW$. With reference to the notation in Lemma~\ref{badegLem}, because the exponents $m_1,n_1,m_2,n_2,\ldots,m_k,n_k$ are all positive, the arbitrary regular word $W$ may be written as
{\scriptsize\begin{eqnarray}
W=\genI^{m_1}\genII^{n_1-1}\cdot\  \genII\genI\  \cdot\genI^{m_2-1}\genII^{n_2-1}\cdot\  \genII\genI\  \cdots\  \genI^{m_{k-1}-1}\genII^{n_{k-1}-1}\cdot\  \genII\genI\cdot\  \genI^{m_k-1}\genII^{n_k},\label{degBAdef0}
\end{eqnarray}}

\noindent and so
\begin{eqnarray}\deg_{\genII\genI}W=k-1.\label{degBAdef}
\end{eqnarray}

We now give some remarks concerning the regular factoring and regular bracketing of the regular word \eqref{necessW}. Suppose that the biggest value among $m_2,m_3,\ldots,m_k$ has its first occurrence at index $s$. By a routine argument, the regularity of $W$ implies
\begin{eqnarray}
m_1\geq m_s,\nonumber
\end{eqnarray}
and so, if
\begin{eqnarray}
A=A_W &:=& \genI^{m_s+1}\genII^{n_1}\cdots\genI^{m_{s-1}}\genII^{n_{s-1}},\nonumber\\
B=B_W &:=& \genI^{m_s}\genII^{n_1}\cdots\genI^{m_{s-1}}\genII^{n_{s-1}},\nonumber\\
C=C_W &:=& \genI^{m_s}\genII^{n_s}\cdots\genI^{m_{k}}\genII^{n_{k}},\nonumber
\end{eqnarray}
then
\begin{eqnarray}
 W=\begin{cases}\genI^{m_1-m_s-1}AC, & \mbox{if }n_1\geq n_s,\\
\genI^{m_1-m_s}BC, & \mbox{if }n_1<n_s.\end{cases}\label{WABC}
\end{eqnarray}
Routine arguments, that make use of inequalities satisfied by the exponents of $\genI$, may be used to show that the words $A$, $B$, $AC$ and $BC$ are all regular. Suppose $P=P_W$ and $Q=Q_W$ are the longest regular proper endings of $AC$ and $BC$, respectively. Since the exponents of $\genI$ in $A$ or $B$ [except $m_1$] are strictly less than the first exponent $m_s$ of $\genI$ in $C$, regularity requires that $P$ and $Q$ are subwords of $C$. Thus, there exist words $X=X_W$ and $Y=Y_W$ such that $C=XP$, or in the other case, $C=YQ$. The corresponding regular factorings are $AC=\lpar AX\rpar\regfac P$ and $BC=\lpar BY\rpar\regfac Q$. Consequently, the regular bracketing of $W$ is given by
\begin{eqnarray}
\lreg W\rreg=\begin{cases}\lpar\ad\genI\rpar^{m_1-m_s-1}\lpar\lbrak \lreg AX\rreg ,\lreg P\rreg\rbrak\rpar, & \mbox{if }n_1\geq n_s,\\
\lpar\ad\genI\rpar^{m_1-m_s}\lpar\lbrak \lreg BY\rreg,\lreg Q\rreg\rbrak\rpar, & \mbox{if }n_1<n_s.\end{cases}\label{bigWbrak}
\end{eqnarray}


\subsection{Inclusion compositions} Another important property of regular words involves interesting and useful Lie algebra manipulations when a regular subword is known. This will lead us to the notion of inclusion compositions that will be defined shorty. This notion is motivated by the following. 

\begin{proposition}[{\cite[Theorem 2.8.3(c)]{ufn95}}]\label{ORsubwordProp} If $V$ is a regular subword of the regular word $W=L\regfac R$, then either
\begin{enumerate}\item $V$ is a subword of $L$;
\item $V$ is a subword of $R$; or
\item there exists a proper ending $R'$ of $R$ such that $W=VR'$. In this case, we say that $V$ is a \emph{beginning of $W$ that intersects $R$}.
\end{enumerate}
\end{proposition}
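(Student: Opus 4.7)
The plan is a position-based case analysis on a chosen occurrence of $V$ in $W$. Writing $W=L_1VR_1$, I would compare $|L_1|+|V|$ with $|L|$: if $|L_1|+|V|\le|L|$ the occurrence sits inside $L$ (case (i)); if $|L_1|\ge|L|$ it sits inside $R$ (case (ii)); otherwise $V$ straddles the factoring, and I decompose $V=V_1V_2$ with $V_1$ the nonempty suffix of $L$ and $V_2$ the nonempty prefix of $R$ that $V$ covers, so $L=L_1V_1$ and $R=V_2R_1$. The task in the straddling case is to force $L_1=1$, which then places $V$ at the beginning of $W$ and gives (iii) with $R'=R_1$.

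Assume, for contradiction, $L_1\neq 1$. Then $VR_1=V_1R$ is a proper ending of $W$ of length $|V_1|+|R|>|R|$, so by the uniqueness part of Lemma~\ref{regLem} it cannot be regular. I would contradict this by showing $VR_1$ is in fact regular, via the standard characterization that a nonempty word is regular iff it is strictly greater, in the paper's ordering $>$, than each of its nonempty proper endings.

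The proper endings $S$ of $VR_1$ split by length. If $|S|>|R_1|$ then $S=V'R_1$ for a nonempty proper ending $V'$ of $V$, and regularity of $V$ gives $V>V'$; since $VR_1$ and $V'R_1$ share the right factor $R_1$, the first position at which the defining concatenations disagree is the same as for $V$ versus $V'$, and $V>V'$ propagates to $VR_1>V'R_1$. If $|S|\le|R_1|$ then $S$ is a nonempty proper ending of $R$, and regularity of $R$ gives $R>S$; combined with the fact that Lemma~\ref{badegLem} forces $V$ to begin with $\genI$ whenever $|V|\ge 2$ (the case $|V|=1$ being immediate from the block structure of $L$ and $R$), a first-letter comparison promotes $R>S$ to $VR_1>S$.

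The main obstacle is the bookkeeping in these comparisons, since the paper's ordering $>$ is defined via the \emph{concatenations} $VR_1\cdot S$ and $S\cdot VR_1$ rather than by plain string lex, so the first position of disagreement can involve letters drawn from either component. The key technical input will be Lemma~\ref{badegLem}: it pins down the block structure $\genI^{m_1}\genII^{n_1}\cdots$ for each of $V$, $R$, and $W$, and together with the straddling hypothesis this constrains where $V$ can overlap $R$ and which letter appears at the critical position. Once the comparisons are pinned down, $VR_1$ is regular, contradicting Lemma~\ref{regLem} and forcing $L_1=1$, which completes the proof.
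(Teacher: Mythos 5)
The paper does not prove this proposition; it is quoted from \cite[Theorem 2.8.3(c)]{ufn95}, so your attempt has to be measured against the standard argument rather than an in-paper one. Your overall strategy is exactly that standard argument: in the straddling case write $L=L_1V_1$, $R=V_2R_1$, $V=V_1V_2$ with $V_1,V_2$ nonempty, observe that if $L_1\neq 1$ then $VR_1=V_1R$ is a proper ending of $W$ longer than $R$, and contradict the maximality in Lemma~\ref{regLem} by proving $VR_1$ regular via the suffix characterization. The logic and the case split are sound, and your treatment of the endings $S=V'R_1$ with $V'$ a proper ending of $V$ is essentially right --- though even there you implicitly use the fact that a proper nonempty ending of a regular word is never a beginning of it (unborderedness); without that, the first disagreement between the defining concatenations could occur beyond position $|V'|$ and would then not coincide with the disagreement for $V$ versus $V'$, so this standard fact should be invoked explicitly.

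The genuine gap is in the endings $S$ with $|S|\le|R_1|$. Your plan is to promote $R>S$ to $VR_1>S$ by a first-letter comparison using the fact that $V$ begins with $\genI$. But a proper ending of a regular word can itself begin with $\genI$ --- for example $S=\genI\genII^2$ is a proper ending of the regular word $R=\genI\genII\genI\genII^2$ --- so the first-letter comparison decides nothing in general, and Lemma~\ref{badegLem}, which only controls the first letter of $V$, cannot close this. What is actually needed is the comparison of $V$ with its proper nonempty ending $V_2$: regularity of $V$ (again in the strong, unbordered form) yields a position $i\le|V_2|$ at which $V$ carries $\genI$ while $V_2$ carries $\genII$; since $V_2$ is a beginning of $R$, and $S$ agrees with $R$ up to the position $j\le|S|$ witnessing $R>S$, one must interleave the positions $i$ and $j$ (splitting according to whether $j\le|V_2|$ or $V_2$ is a beginning of $S$) to locate the first disagreement between $VR_1$ and $S$ and verify that it favours $VR_1$ within the first $|S|$ letters. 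This is precisely the ``bookkeeping'' your final paragraph defers, and it is the heart of the proof rather than a routine afterthought; until it is carried out, the regularity of $VR_1$, and hence the contradiction forcing $L_1=1$, is not established.
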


\begin{proposition}\label{VRProp} If $V$ is a regular subword of a regular word $W$, then there exists a word $U$ such that $VU$ is regular, and that for some $k\in\N$, there exist regular words $U_1,U_2,\ldots,U_k$ and some $\varepsilon_1,\varepsilon_2,\ldots,\varepsilon_k\in\{-1,1\}$ such that if 
\begin{eqnarray}
\Phi:= \lpar\varepsilon_1\ad \lreg U_1\rreg\rpar\lpar\varepsilon_2\ad \lreg U_2\rreg\rpar\cdots \lpar\varepsilon_k\ad \lreg U_k\rreg\rpar,\nonumber
\end{eqnarray}
then
\begin{eqnarray}
\lreg W\rreg = \Phi\lpar\lreg VU\rreg\rpar.\label{VReq}
\end{eqnarray}
[For the case $k=0$, we interpret $\Phi$ as the identity map, or the empty composition of maps.]
\end{proposition}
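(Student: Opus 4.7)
My plan is induction on $|W|$, with Proposition~\ref{ORsubwordProp} supplying the case split. The base case $|W|=1$ forces $V=W$, so the statement holds trivially by taking $U=1$ and $k=0$, interpreting $\Phi$ as the identity.

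For the inductive step with $|W|\geq 2$, Lemma~\ref{regLem} gives the regular factoring $W=L\regfac R$, so $\lreg W\rreg=\lbrak\lreg L\rreg,\lreg R\rreg\rbrak$, and Proposition~\ref{ORsubwordProp} splits the analysis into three mutually exhaustive cases. If $V$ is a subword of $L$, I would apply the inductive hypothesis to $V\subseteq L$ to obtain a word $U$ with $VU$ regular and an operator $\Phi_L$ of the required form such that $\lreg L\rreg=\Phi_L\lpar\lreg VU\rreg\rpar$; then
\begin{equation*}
\lreg W\rreg = \lbrak\lreg L\rreg,\lreg R\rreg\rbrak = \lpar-\ad\lreg R\rreg\rpar\lpar\lreg L\rreg\rpar = \lpar-\ad\lreg R\rreg\rpar\Phi_L\lpar\lreg VU\rreg\rpar,
\end{equation*}
which is of the required form, obtained by prepending $-\ad\lreg R\rreg$ (sign $\varepsilon=-1$) to $\Phi_L$. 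The case where $V$ is a subword of $R$ is symmetric: inducting on $V\subseteq R$ produces $\Phi_R$ with $\lreg R\rreg=\Phi_R\lpar\lreg VU\rreg\rpar$, whence $\lreg W\rreg = \lpar\ad\lreg L\rreg\rpar\Phi_R\lpar\lreg VU\rreg\rpar$, now prepending $\ad\lreg L\rreg$ (sign $+1$) to $\Phi_R$. Finally, if $V$ is a beginning of $W$ intersecting $R$, so that $W=VR'$ for some proper ending $R'$ of $R$, I would simply set $U:=R'$: the word $VU=W$ is regular by hypothesis, and with $k=0$ and $\Phi$ the identity, equation~\eqref{VReq} becomes $\lreg W\rreg=\lreg VU\rreg$, which is automatic.

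I expect no substantial obstacle; the argument is essentially bookkeeping over the trichotomy of Proposition~\ref{ORsubwordProp}. The one subtlety worth flagging in advance is that the first case introduces a minus sign from the antisymmetry of the Lie bracket, which must be absorbed into $\varepsilon=-1$; this is precisely why the statement must allow $\varepsilon_i\in\{-1,+1\}$ rather than restricting to $+1$.
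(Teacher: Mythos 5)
Your proposal is correct and follows essentially the same route as the paper's proof: induction on $|W|$ with the trichotomy of Proposition~\ref{ORsubwordProp}, prepending $-\ad\lreg R\rreg$ or $\ad\lreg L\rreg$ in the first two cases and taking $\Phi$ to be the identity in the third. The sign bookkeeping you flag is exactly how the paper handles it.
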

\begin{proof} Let $W\in\allWords$ be regular. We use induction on $|W|$. Suppose that all words of length strictly less than $W$ satisfy the statement. If $W=L\regfac R$, then the inductive hypothesis applies to $L$ and $R$. We consider cases according to Proposition~\ref{ORsubwordProp}. If $V$ is a subword of $L$, then by the inductive hypothesis, there exists a word $S$ such that $VS$ is regular, and that for some regular words $S_1,S_2,\ldots,S_\ell$ and some $\eta_1,\eta_2,\ldots,$ $\eta_\ell\in\{-1,1\}$,
\begin{eqnarray}
\lreg L\rreg = \lpar\eta_1\ad \lreg S_1\rreg\rpar\lpar\eta_2\ad \lreg S_2\rreg\rpar\cdots \lpar\eta_k\ad \lreg S_k\rreg\rpar\lpar\lreg VS\rreg\rpar,\nonumber
\end{eqnarray}
and so,
\begin{eqnarray}
\lreg W\rreg  & = & \lreg L\regfac R\rreg = \lbrak \lreg L\rreg,\lreg R\rreg\rbrak,\nonumber\\
& = & \lbrak \lpar\eta_1\ad \lreg S_1\rreg\rpar\lpar\eta_2\ad \lreg S_2\rreg\rpar\cdots \lpar\eta_k\ad \lreg S_k\rreg\rpar\lpar\lreg VS\rreg\rpar,\lreg R\rreg\rbrak,\nonumber\\
& = &\lpar-1\cdot\ad \lreg R\rreg\rpar\lpar\eta_1\ad \lreg S_1\rreg\rpar\lpar\eta_2\ad \lreg S_2\rreg\rpar\cdots \lpar\eta_k\ad \lreg S_k\rreg\rpar\lpar\lreg VS\rreg\rpar,\nonumber
\end{eqnarray}
as desired. If $V$ is a subword of $R$, then by the inductive hypothesis, there exists a word $T$ such that $VT$ is regular, and that for some regular words $T_1,T_2,\ldots,T_n$ and some $\nu_1,\nu_2,\ldots,\nu_n\in\{-1,1\}$,
\begin{eqnarray}
\lreg R\rreg = \lpar\nu_1\ad \lreg T_1\rreg\rpar\lpar\nu_2\ad \lreg T_2\rreg\rpar\cdots \lpar\nu_k\ad \lreg T_k\rreg\rpar\lpar\lreg VT\rreg\rpar,\nonumber
\end{eqnarray}
and, consequently,
\begin{eqnarray}
\lreg W\rreg  & = & \lbrak \lreg L\rreg,\lreg R\rreg\rbrak,\nonumber\\
& = & \lbrak \lreg L\rreg, \lpar\nu_1\ad \lreg T_1\rreg\rpar\lpar\nu_2\ad \lreg T_2\rreg\rpar\cdots \lpar\nu_k\ad \lreg T_k\rreg\rpar\lpar\lreg VT\rreg\rpar\rbrak,\nonumber\\
& = & \lpar 1\cdot\ad \lreg L\rreg\rpar\lpar\nu_1\ad \lreg T_1\rreg\rpar\lpar\nu_2\ad \lreg T_2\rreg\rpar\cdots \lpar\nu_k\ad \lreg T_k\rreg\rpar\lpar\lreg VT\rreg\rpar,\nonumber
\end{eqnarray}
which is the desired form for $\lreg W\rreg$. The final case is when $V$ is a beginning of $W$ that intersects $R$. Here, $W=VX$ for some word $X$, and $\lreg W\rreg=\lreg VX\rreg$. Thus,\linebreak $\lreg W\rreg=\Phi\lpar\lreg VX\rreg\rpar$, where $\Phi$ is the identity map. This completes the proof.
\end{proof}

A statement similar to Proposition~\ref{VRProp} was briefly remarked in \cite[p. 38]{ufn95}, but we are aiming here for a more precise articulation of the statement, because it shall be crucial in a later definition. Also, the version in \cite[p. 38]{ufn95} does not make use of nested adjoint maps. [Recall Section~\ref{admapSec}.] In this author's opinion, the concept being expressed in \cite[p. 38]{ufn95} would be better comprehended or appreciated when expressed in terms of nested adjoint maps, just like how Proposition~\ref{VRProp} was stated above.

\begin{proposition}[{\cite[Theorem 2.8.5]{ufn95}}]\label{decompProp} For any nonempty $W\in\allWords$, there exists a unique finite sequence $V_1$, $V_2$, $\ldots$, $V_k$ of regular words such that $W=V_1V_2\cdots V_k$ and that $V_k\geq V_{k-1}\geq \cdots\geq V_1$. In this case, we say that $W=V_1V_2\cdots V_k$ is the \emph{regular decomposition} of the word $W$. In particular\footnote{The special case $k=1$ when $W$ is regular is not included in the statement of\linebreak \cite[Theorem 2.8.5]{ufn95}, but it can be found in the proof \cite[p. 35]{ufn95}. In this author's opinion, mentioning this special case, and even defining a term for it, aids in understanding the idea, especially because, in succeeding proofs, the concept will be used in very specific constructions.}, if $W$ is regular, then $k=1$, in which case the regular decomposition of $W$ is said to be \emph{trivial}.
\end{proposition}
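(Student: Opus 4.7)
I would prove existence and uniqueness separately, both by strong induction on $|W|$, and I would lean on Proposition~\ref{regmultProp} as the main tool.

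For \emph{existence}, the key observation is that every nonempty $W$ has a \emph{longest regular ending}: one-letter endings are automatically regular, so the set of regular endings of $W$ is nonempty, and among all endings of any fixed length there is at most one equal to a given word, so a maximum length is realised by a unique regular ending. Call it $V_k$ and write $W=W'V_k$. If $W'=1$, then $W=V_k$ is already regular and the decomposition is the trivial one with $k=1$. Otherwise apply the inductive hypothesis to the strictly shorter word $W'$ to obtain a regular decomposition $W'=V_1V_2\cdots V_{k-1}$ with $V_{k-1}\geq\cdots\geq V_1$. The only thing to verify is that $V_k\geq V_{k-1}$. If instead $V_{k-1}>V_k$, then by Proposition~\ref{regmultProp} the word $V_{k-1}V_k$ is regular, and this is a regular ending of $W$ strictly longer than $V_k$, contradicting maximality. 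Hence $V_k\geq V_{k-1}$, and concatenating gives the required decomposition. The special statement about $W$ itself being regular is immediate: then $W$ is its own longest regular ending, forcing $W'=1$ and $k=1$.

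For \emph{uniqueness}, I reduce everything to a single lemma: in any regular decomposition $W=V_1V_2\cdots V_k$ with $V_k\geq\cdots\geq V_1$, the last factor $V_k$ is necessarily the longest regular ending of $W$. Once this lemma is in hand, uniqueness follows by induction: two decompositions $W=V_1\cdots V_k=V_1'\cdots V_\ell'$ must satisfy $V_k=V_\ell'$, and then stripping that common factor and invoking the inductive hypothesis on the shorter word $V_1\cdots V_{k-1}=V_1'\cdots V_{\ell-1}'$ finishes the job.

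To prove the lemma, suppose by way of contradiction that some regular ending $V$ of $W$ has $|V|>|V_k|$. Then $V$ reaches leftward past the boundary between $V_{k-1}$ and $V_k$, and there is a unique index $j<k$ and a unique nonempty ending $U$ of $V_j$ with $V=UV_{j+1}V_{j+2}\cdots V_k$. There are two subcases. In subcase (a) one has $U=V_j$, so $V=V_jV_{j+1}\cdots V_k$. Because the chain $V_j\leq V_{j+1}\leq\cdots\leq V_k$ holds in the paper's order, I expect to extract a contradiction by comparing the split $L=V_j$, $R=V_{j+1}\cdots V_k$ against the regularity requirement $L>R$: the inequality $V_j\leq V_{j+1}$ translates letter-by-letter into $V_jV_{j+1}\leq V_{j+1}V_j$, and propagating this through the comparison of $V_jV_{j+1}\cdots V_k$ with its cyclic rotation $V_{j+1}\cdots V_kV_j$ yields $L\not>R$. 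The sub-sub-case $V_j=V_{j+1}$ collapses by the same observation used in Example~\ref{powernotregEx}, namely that a proper power of a word cannot be regular. In subcase (b), $U$ is a proper nonempty ending of $V_j$, so by the regularity of $V_j$ applied to the split $V_j=TU$ one has $T>U$ in the paper's order, while the regularity of $V$ applied to the split $L=U$, $R=V_{j+1}\cdots V_k$ demands $U>V_{j+1}\cdots V_k$; these two inequalities together with $V_j\leq V_{j+1}$ are to be combined to a letter-by-letter contradiction.

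The principal obstacle is precisely subcase (b) of the uniqueness lemma: one has to juggle three simultaneous ordering constraints (regularity of $V_j$, regularity of $V$, and monotonicity of the decomposition) and convert them into a single letter-level contradiction in the paper's nonstandard ordering on $\allWords$. The computations in subcase (a) are routine word combinatorics on cyclic rotations, but in (b) the prefix $U$ is only a piece of a regular word, so one loses direct access to Proposition~\ref{regmultProp}; the contradiction has to be extracted from the interplay of the three inequalities, and this is where care is most needed.
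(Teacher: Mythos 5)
First, a point of reference: the paper does not prove this proposition at all --- it is imported verbatim from \cite[Theorem 2.8.5]{ufn95} --- so there is no in-paper argument to compare yours against, and your proof must stand on its own. Your existence half does: taking the longest regular ending $V_k$ (the set of regular endings is nonempty since single letters are regular), peeling it off, inducting on the remainder, and using Proposition~\ref{regmultProp} to rule out $V_{k-1}>V_k$ (since $V_{k-1}V_k$ would then be a longer regular ending) is correct and complete. One detail worth making explicit: to pass from $\neg(V_k\geq V_{k-1})$ to $V_{k-1}>V_k$ you need the order to be total on regular words; this holds because two distinct words incomparable under the paper's order are powers of a common word, and a proper power is never regular, by the argument of Example~\ref{powernotregEx}.

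The uniqueness half, however, contains a genuine gap. The reduction to the lemma that the last factor of any monotone regular factorization is the longest regular ending is the right move, but the lemma itself is never proved: in subcase (a) you assert that ``propagating'' $V_j\leq V_{j+1}$ through the rotation yields $L\not>R$, and in subcase (b) you say the three inequalities ``are to be combined'' --- and these are exactly the steps where all the content lies. What is missing is one word-combinatorial lemma from which both subcases follow: \emph{if $U,V_1,\ldots,V_m$ are nonempty words with $U\leq V_i$ for every $i$, then $U\not>V_1V_2\cdots V_m$} (the assertion that the paper's order is, in this sense, compatible with concatenation on the right). Granting it, subcase (a) is immediate, since $L=V_j\leq V_{j+1}\leq\cdots\leq V_k$ forces $L\not>R$, contradicting regularity of $V$; and subcase (b) reduces to the same lemma once you observe that a proper nonempty ending $U$ of the regular word $V_j=TU$ satisfies $U<V_j$ --- which follows by appending $U$ on the right of the strict letter-by-letter inequality between $TU$ and $UT$ --- whence $U<V_j\leq V_{j+1}\leq\cdots\leq V_k$ again. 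Note that your stated ingredient $T>U$ is the wrong inequality to carry forward; it is $U<V_j$ that you need. The concatenation lemma is true, but it is precisely the delicate letter-by-letter (equivalently, $U^{\infty}$ versus $V^{\infty}$) argument at the heart of the Chen--Fox--Lyndon theorem, and until it is written out the uniqueness proof is a plan rather than a proof. Finally, your remark that the sub-sub-case $V_j=V_{j+1}$ ``collapses'' via Example~\ref{powernotregEx} is only correct when $k=j+1$; otherwise equality of the first two factors merely shifts the comparison to $V_j$ versus $V_{j+2}\cdots V_k$, and you need the same lemma once more.
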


The Lie subalgebra $\freeLie$ of $\freeAlg$ generated by $\{\genI,\genII\}$ is the free Lie algebra on $\{\genI,\genII\}$. That is, $\freeLie$ has the canonical universal property in the category of all Lie algebras over $\F$ with the same number of generators, or equivalently, that every Lie algebra generated by two elements is isomorphic to a quotient of $\freeLie$. The elements of $\freeLie$ are called the \emph{Lie polynomials in $\genI,\genII$}. The significance of regular words, and of the nonassociative regular words derived from them, is because of the following.

\begin{lemma}[{\cite[pp. 115]{shi09a}}]\label{ShirshovLem} The nonassociative regular words on $\genI$, $\genII$ form a basis for $\freeLie$.
\end{lemma}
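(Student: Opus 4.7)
My plan is to establish the two customary parts separately: that the nonassociative regular words on $\genI,\genII$ span $\freeLie$, and that they are linearly independent in the ambient $\freeAlg$ (whence linearly independent in $\freeLie$).

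For spanning, I would induct on word length $n=|W|$. The generators $\genI,\genII$ themselves lie in $\freeLie$; for the inductive step, any element of $\freeLie$ of degree $n$ is a linear combination of nested Lie brackets in $\genI,\genII$, which by the Jacobi identity can be reduced to a linear combination of brackets $\lbrak\lreg U\rreg,\lreg V\rreg\rbrak$ with $\lreg U\rreg,\lreg V\rreg$ strictly shorter nonassociative regular words and $|U|+|V|=n$. It then suffices to rewrite each such bracket as a linear combination of nonassociative regular words. The case $U=V$ gives zero, and antisymmetry reduces the case $U<V$ to $U>V$. In the remaining case $U>V$, Proposition~\ref{regmultProp} makes $UV$ regular. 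If the regular factoring of $UV$ is exactly $U\regfac V$, then $\lbrak\lreg U\rreg,\lreg V\rreg\rbrak=\lreg UV\rreg$ on the nose. Otherwise the longest regular proper ending of $UV$ is strictly longer than $V$, and an iterated use of the Jacobi identity expresses $\lbrak\lreg U\rreg,\lreg V\rreg\rbrak$ as $\lreg UV\rreg$ plus a combination of brackets of strictly simpler complexity, to which the inductive hypothesis applies.

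For linear independence, I would prove Shirshov's triangular leading-term formula: for every regular word $W$,
\begin{equation*}
\lreg W\rreg\ =\ W\ +\ \sum_{V} c_V V,\qquad c_V\in\F,
\end{equation*}
where each $V$ in the sum satisfies $|V|=|W|$ and $V<W$ in the order of Section~\ref{freeSec}. The base case $W\in\{\genI,\genII\}$ is immediate. For the inductive step with $W=L\regfac R$, the expansion $\lreg W\rreg=\lreg L\rreg\lreg R\rreg-\lreg R\rreg\lreg L\rreg$ combined with the inductive leading-term formulas for $\lreg L\rreg$ and $\lreg R\rreg$ yields $W=LR$ appearing in $\lreg L\rreg\lreg R\rreg$ with coefficient $1$; every other word appearing in $\lreg L\rreg\lreg R\rreg$ is strictly smaller than $LR$ because the first differing character sits in either the $L$-block or the $R$-block and is smaller there, and every word appearing in $\lreg R\rreg\lreg L\rreg$ is strictly smaller than $LR$ because regularity of $W$ forces $L>R$ and hence $LR>RL$. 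Once the triangular formula is in place, linear independence of $\{\lreg W\rreg\  :\  W\text{ regular}\}$ in $\freeAlg$ is immediate from the linear independence of the words themselves.

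The main obstacle is the termination and bookkeeping in the spanning step when the regular factoring of $UV$ differs from $U\regfac V$. All the intermediate terms have the same total word length, so the decreasing statistic cannot be mere length; one must instead descend along a finer measure, such as a lexicographic pair built from $|U|$ and the position of $V$ in the regular ordering, or equivalently along the regular decomposition of Proposition~\ref{decompProp}. This is Shirshov's original induction, expounded cleanly in \cite[Section 2.8]{ufn95}. By comparison, the linear-independence step is essentially formal once the triangular expansion is established.
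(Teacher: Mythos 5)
The paper does not prove this lemma at all: it is quoted verbatim from Shirshov \cite{shi09a} (with \cite{ufn95} as the modern exposition), so there is no in-paper argument to compare against. Your sketch is, in structure, exactly the classical Shirshov--Lyndon proof that those references give, and both halves are sound. The linear-independence half is the stronger part of your write-up: the triangular expansion $\lreg W\rreg = W + \sum_V c_V V$ with $V<W$ of the same length is the right tool, and your inductive step is correct --- note in particular that the inequality $LR>RL$ you need is immediate from the paper's definition of the order on words, since $L>R$ is \emph{defined} there by comparing $LR$ with $RL$ letter by letter. The spanning half is where your sketch is thinnest, and you have diagnosed the thin spot accurately: after reducing to $\lbrak\lreg U\rreg,\lreg V\rreg\rbrak$ with $U>V$, the Jacobi rewriting when $U\regfac V$ is not the regular factoring of $UV$ (equivalently, when the second factor $U_2$ of $U=U_1\regfac U_2$ satisfies $U_2>V$) produces terms of the same total length, so termination must be argued along a finer well-ordering (e.g.\ decreasing length of the left factor, inside an outer induction on total length), and one must also check that the new brackets are again of the form ``larger regular word bracketed with smaller.'' That bookkeeping is precisely the content of \cite[Section 2.8]{ufn95}, so deferring to it is legitimate for a cited classical result, but it is the one step of your proposal that is asserted rather than proved. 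One cosmetic slip: the reduction of an arbitrary nested bracket to brackets of shorter nonassociative regular words uses bilinearity and the inductive hypothesis, not the Jacobi identity; Jacobi enters only in the subsequent rewriting of $\lbrak\lreg U\rreg,\lreg V\rreg\rbrak$.
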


The above result is attributed to A. I. Shirshov \cite[p. 2]{bok07}, because of the seminal paper \cite{shi09a}. However, the definition of a regular word by its "rotational" property is attributed to Lyndon, because of the classic paper \cite{che58}. But still, the significance of regular words and their regular bracketing in the basis theory for free Lie algebras definitely rests on the theorems and constructions on \cite{shi09a}. Thus, regular words are often referred to in the literature as \emph{Lyndon-Shirshov words}.

\begin{definition}\label{angleangleDef} With reference to the notation in Proposition~\ref{VRProp}, if $R\neq 1$, then suppose that $R$ has the regular decomposition $R=C_1C_2\cdots C_\ell$ according to Proposition~\ref{decompProp}. By $\lcomp W_V\rcomp$ we mean the Lie polynomial that results from replacing $\lreg VR\rreg$ in \eqref{VReq} by 
\begin{eqnarray}
\lpar-\ad \lreg C_\ell\rreg\rpar\lpar-\ad \lreg C_{\ell-1}\rreg\rpar\cdots \lpar-\ad \lreg C_1\rreg\rpar\lpar \lreg V\rreg\rpar.\nonumber
\end{eqnarray}
If $R=1$, then $\lreg VR\rreg=\lreg V\rreg$ is retained. In accordance with Lemma~\ref{ShirshovLem}, let the Lie polynomial $\lreg W\rreg-\lcomp W_V\rcomp$ be written as a (unique) linear combination of nonassociative regular words, as in
\begin{eqnarray}
\lreg W\rreg-\lcomp W_V\rcomp = \sum_{t=1}^n c_t \lreg B_t\rreg,\nonumber
\end{eqnarray}
where $c_1,c_2,\ldots, c_n\in\F\setdiff\{0\}$ and $B_1,B_2,\ldots,B_n$ are regular words such that\linebreak $B_n> B_{n-1}>\cdots> B_1 $. By the \emph{inclusion composition}\footnote{In the traditional theory of Gr\"obner-type bases for free Lie algebras, there is another type of composition called \emph{intersection composition}, which, together with the notion of inclusion composition, was originally developed in \cite{shi09b}. However, intersection compositions will play no role in the proofs of our main results.} of the regular word\footnote{The traditional theory also defines inclusion compositions in terms of linear combinations of nonassociative regular words. In this work we only consider linear combinations of exactly one nonassociative regular word. Consequently, we shall be dealing only with Lie algebras generated by two elements satisfying relations of the form $\lreg U\rreg=0$ where $U$ is a regular word.} $W$ with its subword $V$, we mean the Lie polynomial $\frac{1}{c_n}\lreg W\rreg-\frac{1}{c_n}\lcomp W_V\rcomp$. If $\frac{1}{c_n}\lreg W\rreg-\frac{1}{c_n}\lcomp W_V\rcomp=0$, then the inclusion composition of $W$ with $V$ is said to be \emph{trivial}.
\end{definition}

The ten inclusion compositions in the following lemma form the heart of this work.

\begin{lemma}\label{incompLem} If $k\in\N$ and $h,m,n\in\N\setdiff\{0\}$, then
\begin{eqnarray}
\lreg\genI^{m+2}\genII^n\rreg - \lcomp\genI^{m+2}\genII^n_{\genI^{m+1}\genII^n}\rcomp & = & 0,\label{IC1}\\
\lreg\genI^{h+k}\genII^m\genI^h\genII^n\rreg - \lcomp\genI^{h+k}\genII^m\genI^h\genII^n_{\genI^{h+\varepsilon}\genII^m}\rcomp & = & 0,\label{IC2}\\
\lreg\genI^{h+k}\genII^m\genI^h\genII^n\rreg - \lcomp\genI^{h+k}\genII^m\genI^h\genII^n_{\genI^h\genII^n}\rcomp & = & 0,\label{IC3}\\
-\lreg \genI^2\genII^{2}\rreg + \lcomp\genI^2\genII^{2}_{\genI^2\genII}\rcomp & = & 0,\label{IC4}\\
-\lreg \genI^2\genII^{n+1}\rreg + \lcomp\genI^2\genII^{n+1}_{\genI^2\genII^{n}}\rcomp & = & \lreg \genI\genII\genI\genII^n\rreg,\label{IC5}
\end{eqnarray}
where, in \eqref{IC2}, either $\varepsilon = 0$ (if $m<n$) or $\varepsilon =1$ (if $k\geq 1$, $m\geq n$). However, if $n>m+1$, then
\begin{eqnarray}
-\lreg\genI\genII^m\genI\genII^{n+1}\rreg +\lcomp\genI\genII^m\genI\genII^{n+1}_{\genI\genII^m\genI\genII^{n}}\rcomp & = & \lreg\genI\genII^{m+1}\genI\genII^{n}\rreg.\label{IC6}
\end{eqnarray}
With reference to Section~\ref{genregwordSec}, if $W=\genI^{m_1}\genII^{n_1}\genI^{m_2}\genII^{n_2}\cdots \genI^{m_k}\genII^{n_k}$ is an arbitrary regular word with $|W|\geq 2$, where the biggest value among $m_2,m_3,\ldots,m_k$ has its first occurrence at index $s$, and given the subwords
\begin{eqnarray}
A=A_W &=& \genI^{m_s+1}\genII^{n_1}\cdots\genI^{m_{s-1}}\genII^{n_{s-1}},\nonumber\\
B=B_W &=& \genI^{m_s}\genII^{n_1}\cdots\genI^{m_{s-1}}\genII^{n_{s-1}},\nonumber\\
C=C_W &=& \genI^{m_s}\genII^{n_s}\cdots\genI^{m_{k}}\genII^{n_{k}},\nonumber
\end{eqnarray}
such that $AC$ and $BC$ are regular, with $AC=\lpar AX\rpar\regfac P$ and $BC=\lpar BY\rpar\regfac Q$ for some words $X$ and $Y$, then
\begin{eqnarray}
\lreg W\rreg - \lcomp W_{AX}\rcomp & = & 0,\quad \mbox{if }n_1\geq n_s,\label{IC7}\\
\lreg W\rreg - \lcomp W_{BY}\rcomp & = & 0,\quad \mbox{if }n_1< n_s,\label{IC8}\\
\lreg W\rreg - \lcomp W_{P} \rcomp & = & 0,\quad \mbox{if }n_1\geq n_s,\label{IC9}\\
\lreg W\rreg - \lcomp W_{Q}\rcomp & = & 0,\quad \mbox{if }n_1< n_s.\label{IC10}
\end{eqnarray}
\end{lemma}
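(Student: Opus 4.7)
The plan is to verify each of the ten equations by direct computation: for each pair $(W,V)$ I will (i)~read off the decomposition $\lreg W\rreg = \Phi(\lreg VU\rreg)$ from Proposition~\ref{VRProp} by locating $V$ inside the standard regular bracketing of $\lreg W\rreg$ given by Examples~\ref{Illus2Ex}--\ref{Illus3Ex} or by \eqref{WABC}--\eqref{bigWbrak}, (ii)~form $\lcomp W_V\rcomp$ via the replacement prescribed in Definition~\ref{angleangleDef}, and (iii)~subtract and simplify.

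Seven of the equations --- namely \eqref{IC1}, \eqref{IC2}, \eqref{IC3} and \eqref{IC7}--\eqref{IC10} --- vanish for one structural reason: the chosen $V$ already sits as an inner slot of the standard bracketing of $\lreg W\rreg$. Either $U=1$ (so no replacement occurs and $\lcomp W_V\rcomp = \Phi(\lreg V\rreg) = \lreg W\rreg$ immediately), or $U$ is a single regular word with $VU$ factoring regularly as $V\regfac U$, in which case the prescribed replacement $(-\ad\lreg U\rreg)(\lreg V\rreg)$ coincides with $\lreg VU\rreg = \lbrak \lreg V\rreg,\lreg U\rreg\rbrak$ by antisymmetry. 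Explicitly: \eqref{IC1} uses the factoring of Example~\ref{Illus2Ex}\ref{firstform2} together with Example~\ref{Illus3Ex}\ref{firstformBrak}; \eqref{IC2} and \eqref{IC3} are treated in the two subcases of Example~\ref{Illus3Ex}\ref{genformBrak}; and \eqref{IC7}--\eqref{IC10} invoke \eqref{bigWbrak} together with the factorings $AC=(AX)\regfac P$ and $BC=(BY)\regfac Q$ supplied by Section~\ref{genregwordSec}.

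The three remaining equations \eqref{IC4}, \eqref{IC5} and \eqref{IC6} share a common shape: $V$ is a proper beginning of $W$ that extends past the factoring point, so that $W=VU$ with $U=\genII$, $\Phi=\id$, and singleton regular decomposition $C_1=\genII$. Writing $\lreg V\rreg = \lbrak X,Y\rbrak$ as read off from Example~\ref{Illus3Ex}, the difference $-\lreg W\rreg + \lcomp W_V\rcomp$ takes the form $\lbrak -\genII,\lbrak X,Y\rbrak\rbrak - \lbrak X,\lbrak -\genII,Y\rbrak\rbrak$, which a single Jacobi--antisymmetry step collapses to $\lbrak (-\ad\genII)(X),\, Y\rbrak$. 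For \eqref{IC4} this equals $\lbrak\lbrak\genI,\genII\rbrak,\lbrak\genI,\genII\rbrak\rbrak = 0$; for \eqref{IC5} (implicitly requiring $n\geq 2$ so that the right-hand side is a nonassociative regular word) it equals $\lbrak\lreg\genI\genII\rreg,\lreg\genI\genII^n\rreg\rbrak = \lreg\genI\genII\genI\genII^n\rreg$ by Example~\ref{Illus3Ex}\ref{secondformBrak}; for \eqref{IC6} it equals $\lbrak\lreg\genI\genII^{m+1}\rreg,\lreg\genI\genII^n\rreg\rbrak = \lreg\genI\genII^{m+1}\genI\genII^n\rreg$, well-defined because $n>m+1$ makes the latter word regular.

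The main obstacle I anticipate is the bookkeeping behind \eqref{IC7}--\eqref{IC10}: one must unpack Proposition~\ref{VRProp}'s inductive construction across the several layers of regular factoring implicit in \eqref{bigWbrak}, and verify for each of $V\in\{AX, BY, P, Q\}$ that the prescribed replacement indeed recovers $\lreg W\rreg$. The Jacobi manipulations behind \eqref{IC4}--\eqref{IC6} are, by contrast, short and mechanical once $(\Phi, U)$ has been identified.
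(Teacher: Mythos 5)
Your proposal is correct and follows essentially the same route as the paper: the seven trivial compositions are disposed of exactly as you describe (the chosen subword $V$ either is the longest regular proper ending, so $U=1$ and nothing is replaced, or pairs with that ending so that the prescribed replacement reproduces $\lbrak\lreg V\rreg,\lreg U\rreg\rbrak$), and the paper's treatment of \eqref{IC4}--\eqref{IC6} is the same single Jacobi--antisymmetry step yielding $\lbrak\lreg\genI\genII\rreg,\lreg\genI\genII^n\rreg\rbrak$ and $\lbrak\lreg\genI\genII^{m+1}\rreg,\lreg\genI\genII^n\rreg\rbrak$, with the $n\geq 2$ proviso for \eqref{IC5} handled just as you note. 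The bookkeeping you anticipate for \eqref{IC7}--\eqref{IC10} is resolved in the paper precisely by the two rewritings of \eqref{bigWbrak} that you cite, so no further unpacking of Proposition~\ref{VRProp} is required.
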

\begin{proof}\begin{enumerate}\item\label{incomp1} {\it Proof of \eqref{IC1}.} Using Example~\ref{Illus1Ex}\ref{mnform}, $\genI^{m+1}\genII^n$ is a regular subword of $\genI^{m+2}\genII^n$. In particular, if we consider Example~\ref{Illus2Ex}\ref{firstform2}, $\genI^{m+1}\genII^n$ is the longest regular proper ending of $\genI^{m+2}\genII^n$, and so, by Example~\ref{Illus3Ex}\ref{firstformBrak}, 
\begin{eqnarray}
\lreg\genI^{m+2}\genII^n\rreg = \lpar\ad\genI\rpar\lpar\lreg \genI^{m+1}\genII^n\rreg\rpar.\label{mainExEQ1}
\end{eqnarray}
From Definition~\ref{angleangleDef}, we find that in order to form the Lie polynomial\linebreak $\lcomp\genI^{m+2}\genII^n_{\genI^{m+1}\genII^n}\rcomp$, we simply retain $\lreg \genI^{m+1}\genII^n\rreg$ in the right-hand side of \eqref{mainExEQ1}. Thus, we have the trivial inclusion composition \eqref{IC1}.

\item\label{incomp2} {\it Proof of \eqref{IC2} and \eqref{IC3}.} We may simplify the regular bracketing shown in Example~\ref{Illus3Ex}\ref{genformBrak} as
\begin{eqnarray}
\lreg\genI^{h+k}\genII^m\genI^h\genII^n\rreg = \lpar\ad\genI\rpar^{k-\varepsilon}\lbrak\lreg\genI^{h+\varepsilon}\genII^m\rreg,\lreg\genI^h\genII^n\rreg\rbrak,\label{anothertriv0}
\end{eqnarray}
where
\begin{eqnarray}
\varepsilon=\begin{cases}0, &\mbox{if } m<n,\\
1, &\mbox{if }k\geq 1,\  m\geq n.\end{cases}\nonumber
\end{eqnarray}
Another perspective is that, if we let $R=\genI^h\genII^n$, then 
\begin{eqnarray}
\lreg\genI^{h+k}\genII^m\genI^h\genII^n\rreg = \lpar\ad\genI\rpar^{k-\varepsilon}\lpar\lreg\genI^{h+\varepsilon}\genII^mR\rreg\rpar,\label{anothertriv}
\end{eqnarray}
which is a form apparently more suitable in applying Proposition~\ref{VRProp} and Definition~\ref{angleangleDef} in determining $\lcomp\genI^{h+k}\genII^m\genI^h\genII^n_{\genI^{h+\varepsilon}\genII^m}\rcomp$. However, by Proposition~\ref{decompProp}, since $R$ is regular, its regular decomposition is trivial. Thus,\linebreak $\lreg\genI^{h+\varepsilon}\genII^mR\rreg$ in \eqref{anothertriv} is to be replaced by
\begin{eqnarray}
\lpar-\ad\lreg R\rreg\rpar\lpar\lreg\genI^{h+\varepsilon}\genII^m\rreg \rpar & = & -\lbrak\lreg R\rreg,\lreg\genI^{h+\varepsilon}\genII^m\rreg\rbrak,\nonumber\\
 & = &  \lbrak\lreg\genI^{h+\varepsilon}\genII^m\rreg,\lreg R\rreg\rbrak,\nonumber\\
& = & \lbrak\lreg\genI^{h+\varepsilon}\genII^m\rreg,\lreg\genI^h\genII^n\rreg\rbrak,\nonumber
\end{eqnarray}
which gives us the same thing as the right-hand side of \eqref{anothertriv0}. Thus, the inclusion composition of $\genI^{h+k}\genII^m\genI^h\genII^n$ with $\genI^{h+\varepsilon}\genII^m$ is trivial. Using arguments similar to those used in part \ref{incomp1} of this proof, the inclusion composition of $\genI^{h+k}\genII^m\genI^h\genII^n$ with $\genI^h\genII^n$ is also trivial. The result is \eqref{IC2} and \eqref{IC3}.

\item\label{incomp3} {\it Proof of \eqref{IC4} and \eqref{IC5}.} Following the third part of the proof of Proposition~\ref{VRProp}, for the regular word $\genI^2\genII^{n+1}$ and its regular subword $\genI^2\genII^{n}$, we find that $\lreg \genI^2\genII^{n+1}\rreg=\Phi\lpar\lreg \genI^2\genII^{n}U\rreg\rpar$ where $U=\genII$ and $\Phi$ is the identity map. Following Proposition~\ref{decompProp}, the regular decomposition of $U$ is simply $U=\genII$, and we further have
\begin{eqnarray}
\lreg \genI^2\genII^{n+1}\rreg & = & \Phi\lpar\lreg \genI^2\genII^{n}U\rreg\rpar,\nonumber\\
& = & \lreg \genI^2\genII^{n} \cdot\genII\rreg.\label{VRreplace}
\end{eqnarray}
By Definition~\ref{angleangleDef}, in order to form the Lie polynomial $\lcomp\genI^2\genII^{n+1}_{\genI^2\genII^{n}}\rcomp$, we replace $\lreg \genI^2\genII^{n} \cdot\genII\rreg$ in \eqref{VRreplace} by $\lpar-\ad\lreg\genII\rreg\rpar\lpar\genI^2\genII^{n}\rpar$. That is,
\begin{eqnarray}
\lcomp\genI^2\genII^{n+1}_{\genI^2\genII^{n}}\rcomp & = & \lbrak \lreg\genI^2\genII^{n}\rreg,\genII\rbrak,\nonumber\\
& = & \lbrak \lreg\genI\regfac\genI\genII^{n}\rreg,\genII\rbrak,\nonumber\\
& = & \lbrak \lbrak\genI,\lreg\genI\genII^{n}\rreg\rbrak,\genII\rbrak,\label{iJacobi}
\end{eqnarray}
while from Example~\ref{Illus3Ex}\ref{firstformBrak}, we obtain
\begin{eqnarray}
\lreg \genI^2\genII^{n+1}\rreg = \lbrak\genI,\lbrak\lreg\genI\genII^{n}\rreg,\genII\rbrak\rbrak.\label{iiJacobi}
\end{eqnarray}
We subtract \eqref{iJacobi} from \eqref{iiJacobi}, and after routine computations that make use of the Jacobi identity and the skew-symmetry of the Lie bracket, we obtain the inclusion composition
\begin{eqnarray}
-\lreg \genI^2\genII^{n+1}\rreg + \lcomp\genI^2\genII^{n+1}_{\genI^2\genII^{n}}\rcomp = \lbrak\lreg \genI\genII\rreg,\lreg\genI\genII^n\rreg\rbrak,\nonumber
\end{eqnarray}
which reduces to the trivial inclusion composition \eqref{IC4} if $n=1$, but if $n\geq 2$, then using Example~\ref{Illus3Ex}\ref{firstformBrak}--\ref{secondformBrak}, we get \eqref{IC5}.

\item\label{incomp4} {\it Proof of \eqref{IC6}.} From $m<m+1<n<n+1$, we get $m<n+1$. By\linebreak Example~\ref{Illus1Ex}\ref{mnmnform}, the inequalities $m+1<n$ and $m<n+1$ imply that the words $\genI\genII^{m+1}\genI\genII^{n}$ and $\genI\genII^m\genI\genII^{n+1}$ are regular, and using arguments and computations similar to those used in part \ref{incomp3} of this proof, we get \eqref{IC6}.

\item\label{incomp5} {\it Proof of \eqref{IC7}--\eqref{IC10}.} The regular bracketing \eqref{bigWbrak} of $W$ may be rewritten in two other ways:
\begin{eqnarray}
\lreg W\rreg  & = & \begin{cases}\lpar\ad\genI\rpar^{m_1-m_s-1}\lpar -\ad \lreg P\rreg\rpar\lpar\lreg AX\rreg\rpar, & \mbox{if }n_1\geq n_s,\\
\lpar\ad\genI\rpar^{m_1-m_s}\lpar-\ad \lreg Q\rreg\rpar\lpar\lreg BY\rreg\rpar, & \mbox{if }n_1<n_s,\end{cases}\nonumber\\
\lreg W\rreg  & = & \begin{cases}\lpar\ad\genI\rpar^{m_1-m_s-1}\lpar\ad \lreg AX\rreg\rpar\lpar \lreg P\rreg\rpar, & \mbox{if }n_1\geq n_s,\\
\lpar\ad\genI\rpar^{m_1-m_s}\lpar \ad \lreg BY\rreg\rpar\lpar\lreg Q\rreg\rpar, & \mbox{if }n_1<n_s.\end{cases}\nonumber
\end{eqnarray}
Following Proposition~\ref{VRProp} and Definition~\ref{angleangleDef}, the above equations imply that the inclusion compositions in the left-hand sides of \eqref{IC7}--\eqref{IC10} are indeed trivial.\qedhere
\end{enumerate}
\end{proof}

\begin{lemma}\label{incompLem2} Let $W$ and $V$ be regular words such that $V$ is a subword of $W$, and let $\anyLIdeal$ be a Lie ideal of $\freeLie$.
\begin{enumerate}\item\label{LIdeal1} If the inclusion composition of $W$ with $V$ is trivial and $\lreg V\rreg\in\anyLIdeal$, then\linebreak $\lreg W\rreg\in\anyLIdeal$.
\item\label{LIdeal2} If $\lreg V\rreg\in\anyLIdeal$ and $\lreg W\rreg\in\anyLIdeal$, then the inclusion composition of $W$ with $V$ is an element of $\anyLIdeal$.
\end{enumerate}
\end{lemma}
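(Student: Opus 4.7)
The plan is to reduce both parts to a single observation: in every case, $\lcomp W_V\rcomp$ equals $\Psi(\lreg V\rreg)$ for some composition $\Psi$ of signed adjoint maps $\pm\ad f$ with $f\in\freeLie$. Once this is in hand, both (i) and (ii) follow from the fact that a Lie ideal is closed under $\ad f$ for every $f\in\freeLie$. First I would unpack Definition~\ref{angleangleDef} using Proposition~\ref{VRProp}. The latter gives $\lreg W\rreg=\Phi(\lreg VR\rreg)$ for some (possibly empty) word $R$ and some composition $\Phi$ of maps of the form $\pm\ad\lreg U_i\rreg$ arising from regular words $U_i$. If $R=1$, then $\lcomp W_V\rcomp=\Phi(\lreg V\rreg)$ by definition. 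If $R\neq 1$ has regular decomposition $R=C_1C_2\cdots C_\ell$, then $\lcomp W_V\rcomp$ is obtained from $\lreg W\rreg$ by substituting
\[
(-\ad\lreg C_\ell\rreg)(-\ad\lreg C_{\ell-1}\rreg)\cdots(-\ad\lreg C_1\rreg)(\lreg V\rreg)
\]
in place of $\lreg VR\rreg$, so that $\lcomp W_V\rcomp=\Phi\circ(-\ad\lreg C_\ell\rreg)\circ\cdots\circ(-\ad\lreg C_1\rreg)(\lreg V\rreg)$. Either way, $\lcomp W_V\rcomp=\Psi(\lreg V\rreg)$ for some composition $\Psi$ of signed adjoint maps associated with regular words.

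The key observation is that $\Psi(\lreg V\rreg)\in\anyLIdeal$ whenever $\lreg V\rreg\in\anyLIdeal$. Indeed, by Lemma~\ref{ShirshovLem} each $\lreg U_i\rreg$ and each $\lreg C_j\rreg$ lies in $\freeLie$; and because $\anyLIdeal$ is a Lie ideal of $\freeLie$, each map $\pm\ad g$ with $g\in\freeLie$ sends $\anyLIdeal$ into itself. A straightforward induction on the number of factors in $\Psi$ then yields $\lcomp W_V\rcomp=\Psi(\lreg V\rreg)\in\anyLIdeal$.

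From this, both statements follow immediately. For (i), triviality of the inclusion composition means $\lreg W\rreg-\lcomp W_V\rcomp=0$, so $\lreg W\rreg=\lcomp W_V\rcomp\in\anyLIdeal$. For (ii), the inclusion composition equals $\frac{1}{c_n}\lreg W\rreg-\frac{1}{c_n}\lcomp W_V\rcomp$; with both $\lreg W\rreg$ (by hypothesis) and $\lcomp W_V\rcomp$ (by the observation above) in $\anyLIdeal$, and $\anyLIdeal$ being an $\F$-subspace, this combination lies in $\anyLIdeal$. The only mildly subtle point is correctly and uniformly handling the degenerate cases $k=0$ (where $\Phi$ is the identity) and $R=1$ (where no replacement is performed), so that $\Psi$ is unambiguously defined in every situation; the substantive work is already encapsulated in Definition~\ref{angleangleDef} and Proposition~\ref{VRProp}.
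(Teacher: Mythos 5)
Your proposal is correct and follows essentially the same route as the paper's proof: both express $\lcomp W_V\rcomp$ as a composition of signed adjoint maps $\pm\ad\lreg U_i\rreg$ and $-\ad\lreg C_j\rreg$ applied to $\lreg V\rreg$, invoke closure of the Lie ideal $\anyLIdeal$ under adjoint maps, and then conclude part \ref{LIdeal1} from $\lreg W\rreg=\lcomp W_V\rcomp$ and part \ref{LIdeal2} from closure under linear combinations. No gaps.
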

\begin{proof} By Proposition~\ref{VRProp} and Definition~\ref{angleangleDef},
{\scriptsize\begin{eqnarray}
\lcomp W_V\rcomp = \lpar\varepsilon_1\ad \lreg U_1\rreg\rpar\cdots \lpar\varepsilon_k\ad \lreg U_k\rreg\rpar\lpar-\ad \lreg C_\ell\rreg\rpar\lpar-\ad \lreg C_{\ell-1}\rreg\rpar\cdots \lpar-\ad \lreg C_1\rreg\rpar\lpar \lreg V\rreg\rpar,\label{longAd}
\end{eqnarray}}

\noindent where $\varepsilon_i\in\{-1,1\}$ and $U_i,C_j\in\freeLie$ for any $i\in\{1,2,\ldots,k\}$ and any\linebreak $j\in\{1,2,\ldots,\ell\}$. Since $\anyLIdeal$ is a Lie ideal of $\freeLie$, if $\lreg V\rreg\in\anyLIdeal$, then by \eqref{longAd}, $\lcomp W_V\rcomp\in\anyLIdeal$. If the inclusion composition of $W$ with $V$ is trivial, then $\lreg W\rreg=\lcomp W_V\rcomp\in\anyLIdeal$, proving \ref{LIdeal1}. If instead we have $\lreg V\rreg,\lreg W\rreg\in\anyLIdeal$, then by \eqref{longAd}, $\lcomp W_V\rcomp$ and $\lreg W\rreg$ are elements of $\anyLIdeal$, and so is any linear combination of them, such as the inclusion composition of $W$ with $V$. This proves \ref{LIdeal2}. 
\end{proof}


\subsection{A Lie ideal of $\freeLie$ and its normal complement}

As according to Lemma~\ref{ShirshovLem}, the nonassociative regular words form a basis for\linebreak $\freeLie$, and at this point, we partition this basis into two kinds: what shall be important in the subsequent development of Lie structure theory for the Heisenberg-Weyl algebra are the nonassociative regular words
\begin{eqnarray}
\qquad\qquad\genII,\qquad\lreg \genI\genII^n\rreg,\qquad\qquad\qquad (n\in\N),\label{HWnormalLie}
\end{eqnarray}
and so if 
\begin{eqnarray}
\normalLie:=\mbox{Span}_{\F}\  \{\genII,\  \lreg \genI\genII^n\rreg\  :\  n\in\N\},\nonumber
\end{eqnarray}
then all the nonassociative regular words \emph{not} in \eqref{HWnormalLie} span a vector subspace $\normalLie^c$ of $\freeLie$ such that we have the direct sum decomposition 
\begin{eqnarray}
\freeLie=\normalLie\oplus\normalLie^c.\label{TheDsum}
\end{eqnarray}
Later we shall need to classify the aforementioned basis elements of $\normalLie^c$, and for this we need the necessary condition for the regularity of a word from Lemma~\ref{badegLem}.

By some routine argument, the set of all nonassociative regular words may be partitioned using the equivalence relation under which two nonassociative regular words $\lreg W\rreg$ and $\lreg V\rreg$ are related if and only if 
\begin{eqnarray}
\degBA W=\degBA V.\label{equivrel}
\end{eqnarray}
[Recall how this number was defined in \eqref{degBAdef0}--\eqref{degBAdef}.] The equivalence class that contains all $\lreg W\rreg$ with $\deg_{\genII\genI}W=0$ is precisely the set containing the basis elements of $\normalLie$ from \eqref{HWnormalLie} together with
\begin{eqnarray}
\lreg\genI^{m+1}\genII^n\rreg, & &  m,n\in\N\setdiff\{0\}.\label{compbasis2} 
\end{eqnarray}
Consequently, all nonassociative regular words $\lreg W\rreg$ with $\deg_{\genII\genI}W\geq 2$, together with those in \eqref{compbasis2}, form a basis, which we shall refer to as the \emph{regular basis}, for $\normalLie^c$. 

\begin{lemma}\label{TheBigLem} Every regular basis element of $\normalLie^c$ is contained in the Lie ideal of $\freeLie$ generated by
\begin{eqnarray}
\qquad\qquad\lreg \genI^2\genII^n\rreg,\qquad\qquad\qquad (n\in\N\setdiff\{0\}).\label{twoLIdealGens}
\end{eqnarray}
\end{lemma}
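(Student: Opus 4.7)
The basis of $\normalLie^c$ is of two flavours: the words $\lreg\genI^{m+1}\genII^n\rreg$ with $m,n\geq 1$ (the $\degBA = 0$ basis elements outside $\normalLie$), and every nonassociative regular word $\lreg W\rreg$ with $\degBA W\geq 1$. Write $\anyLIdeal$ for the Lie ideal of $\freeLie$ generated by $\{\lreg\genI^2\genII^n\rreg:n\geq 1\}$. My plan is to attack three sub-families in turn: first the elements $\lreg\genI^{m+1}\genII^n\rreg$; then the special two-block words $\lreg\genI\genII^m\genI\genII^n\rreg$ with $m<n$; and finally every remaining $\lreg W\rreg$ with $\degBA W\geq 1$ by induction on $|W|$ through the regular factoring $W=L\regfac R$.

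For the first sub-family I would induct on $m$: the base $m=1$ lists a generator of $\anyLIdeal$, and the inductive step uses the trivial inclusion composition \eqref{IC1} together with Lemma~\ref{incompLem2}\ref{LIdeal1} to pull $\lreg\genI^{m+2}\genII^n\rreg$ into $\anyLIdeal$ from $\lreg\genI^{m+1}\genII^n\rreg$. For the two-block family I would induct on $m\geq 1$. At $m=1$ I rearrange \eqref{IC5} to read $\lreg\genI\genII\genI\genII^n\rreg = -\lreg\genI^2\genII^{n+1}\rreg + \lcomp\genI^2\genII^{n+1}_{\genI^2\genII^n}\rcomp$ for $n\geq 2$; the first term is a generator, while the second lies in $\anyLIdeal$ by the adjoint expansion \eqref{longAd} applied to the generator $\lreg\genI^2\genII^n\rreg$. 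The inductive step rearranges \eqref{IC6} in the same fashion, writing $\lreg\genI\genII^{m+1}\genI\genII^n\rreg$ as a combination of $\lreg\genI\genII^m\genI\genII^{n+1}\rreg$ and $\lcomp\genI\genII^m\genI\genII^{n+1}_{\genI\genII^m\genI\genII^n}\rcomp$, both of which are in $\anyLIdeal$ by the inductive hypothesis.

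For the third sub-family I would induct on $|W|$ using $W=L\regfac R$, where $L$ must begin with $\genI$ by Lemma~\ref{badegLem} and both $|L|,|R|<|W|$. Since $\anyLIdeal$ is a Lie ideal, it is enough that one of $\lreg L\rreg$, $\lreg R\rreg$ lies in $\anyLIdeal$; then $\lreg W\rreg=\lbrak\lreg L\rreg,\lreg R\rreg\rbrak\in\anyLIdeal$. If either factor is a basis element of $\normalLie^c$, the inductive hypothesis (augmented by the first sub-family when that factor has $\degBA=0$) supplies it. The main obstacle is therefore the possibility that both $\lreg L\rreg$ and $\lreg R\rreg$ are basis elements of $\normalLie$, leaving the induction stranded. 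A case check of admissible pairs $(L,R)$ with $L\in\{\genI\genII^p:p\geq 0\}$ and $R\in\{\genII,\genI\genII^p:p\geq 0\}$ against the constraint $\degBA W\geq 1$ pins this possibility down precisely to $W=\genI\genII^m\genI\genII^n$ with $m<n$; and this is exactly the obstruction forcing the introduction of the nontrivial inclusion compositions \eqref{IC5} and \eqref{IC6}, which the second sub-family above is designed to resolve.
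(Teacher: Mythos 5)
Your proof is correct, but your handling of the main case is genuinely different from the paper's. Both arguments treat the family $\lreg \genI^{m+1}\genII^n\rreg$ via \eqref{IC1} and the two-block family $\lreg \genI\genII^m\genI\genII^n\rreg$ ($m<n$) via \eqref{IC5}--\eqref{IC6} in essentially the same way. For everything else, however, the paper inducts on $\degBA W$ and invokes the decomposition of Section~\ref{genregwordSec} (the subwords $A_W$, $B_W$, $C_W$ and the factorings $AC=\lpar AX\rpar\regfac P$, $BC=\lpar BY\rpar\regfac Q$), using the trivial inclusion compositions \eqref{IC2}, \eqref{IC3} and \eqref{IC7}--\eqref{IC10} together with Lemma~\ref{incompLem2}\ref{LIdeal1}; it then needs a separate argument to catch the stragglers where $\degBA \widehat{W}=0$. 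You instead induct on $|W|$ and use only the identity $\lreg W\rreg=\lbrak \lreg L\rreg,\lreg R\rreg\rbrak$ for $W=L\regfac R$ plus the Lie-ideal property, so that the induction closes unless both $\lreg L\rreg$ and $\lreg R\rreg$ are basis elements of $\normalLie$; your case check that this forces $W=\genI\genII^m\genI\genII^n$ with $m<n$ is right (since $W$ begins with $\genI$ and ends with $\genII$ by Lemma~\ref{badegLem}, the pair $(L,R)$ can only be $(\genI\genII^p,\genII)$, $(\genI,\genI\genII^q)$ or $(\genI\genII^p,\genI\genII^q)$ with $p\geq 1$, and the first two give $\degBA W=0$). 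What your route buys is economy and transparency: \eqref{IC2}, \eqref{IC3} and \eqref{IC7}--\eqref{IC10} become unnecessary for this lemma, and the two-block words are exhibited as the precise obstruction that makes the nontrivial compositions \eqref{IC5}, \eqref{IC6} unavoidable. What the paper's route buys is that the whole proof stays inside the inclusion-composition formalism, consistent with the Gr\"obner--Shirshov viewpoint the paper is developing and with the machinery already set up in Section~\ref{genregwordSec}. One small point to make explicit when writing your version up: the second sub-family must be settled for all lengths before (or independently of) the length induction of the third part, since two-block words can occur as the factor $L$ or $R$ of a longer $W$; your separate induction on $m$ does exactly this, so there is no circularity.
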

\begin{proof} Let $\lreg W\rreg$ be a regular basis element of $\normalLie^c$, and let $\twoLIdeal$ be the Lie ideal of $\freeLie$ generated by \eqref{twoLIdealGens}. This proof is organized according to the equivalence class, under the equivalence relation defined by \eqref{equivrel}, to which $\lreg W\rreg$ belongs. In each case, we shall be using an inclusion composition from Lemma~\ref{incompLem}, and then Lemma \ref{incompLem2}, to produce the desired set membership $\lreg W\rreg\in\twoLIdeal$.

If $\degBA W=0$, then $\lreg W\rreg$ is either one of \eqref{HWnormalLie}, or one of \eqref{compbasis2}, where the former are not regular basis elements of $\normalLie^c$, while the latter are. Equivalently, $W$ is a product of a power of $\genI$ followed by a power of $\genII$, where both exponents are positive, but that of $\genI$ is at least $2$. If this exponent of $\genI$ is exactly $2$, then $\lreg W\rreg$ is one of the generators \eqref{twoLIdealGens} of $\twoLIdeal$, and we are done. We proceed by induction. If, for some positive integer $m$, we have $\lreg\genI^{m+1}\genII^n \rreg\in\twoLIdeal$, then we simply use the trivial inclusion composition \eqref{IC1} and Lemma~\ref{incompLem2}\ref{LIdeal1}, to deduce $\lreg\genI^{m+2}\genII^n \rreg\in\twoLIdeal$. By induction, we get the desired result.

For the case $\degBA W =1$, we have $W=\genI^{h+k}\genII^m\genI^h\genII^n$ for some $h,k,m,n\in\N$ with $h,m,n$ positive. If $h\geq 2$, then, given $\varepsilon\in\{0,1\}$ from \eqref{IC2}, both $h$ and $h+\varepsilon$ are at least $2$, and we have $\lreg\genI^{h+\varepsilon}\genII^m\rreg,\lreg\genI^h\genII^n\rreg\in\twoLIdeal$, according to the previous case.\linebreak Using the trivial inclusion composition \eqref{IC2} or \eqref{IC3}, and Lemma~\ref{incompLem2}\ref{LIdeal1},\linebreak $\lreg\genI^{h+k}\genII^m\genI^h\genII^n\rreg\in\twoLIdeal$. The trivial inclusion composition \eqref{IC2} may also be used for the subcase $h=1$ and $\varepsilon=1$. We now consider the subcase $h=1$ and $k=0$. That is, $\genI^{h+k}\genII^m\genI^h\genII^n=\genI\genII^m\genI\genII^n$. We use induction on $m$. If $m=1$, then we use the inclusion compostion \eqref{IC5} where $\lreg\genI^2\genII^{n+1}\rreg,\lreg \genI^2\genII^n\rreg\in\twoLIdeal$. By Lemma~\ref{incompLem2}\ref{LIdeal2}, $\lreg\genI\genII\genI\genII^n\rreg\in\twoLIdeal$. Suppose that for some positive integer $m$, for any integer $n>m$, $\lreg\genI\genII^m\genI\genII^n\rreg\in\twoLIdeal$. To proceed with the inductive step at $m+1$, we assume that $n>m+1$ so that, by Example~\ref{Illus1Ex}\ref{mnmnform}, $\genI\genII^{m+1}\genI\genII^n$ is regular. From $n>m+1$, we get $n+1>n>m+1>m$. Thus, both $n+1>m$ and $n>m$ are true. By the inductive hypothesis,\linebreak $\lreg\genI\genII^{m}\genI\genII^{n+1}\rreg,\  \lreg\genI\genII^{m}\genI\genII^{n}\rreg\in\twoLIdeal$, and using the inclusion composition \eqref{IC6} and\linebreak Lemma~\ref{incompLem2}\ref{LIdeal2}, we obtain $\lreg\genI\genII^{m+1}\genI\genII^n\rreg\in\twoLIdeal$, which completes the induction, and also, the proof for the case $\degBA W =1$.

We now consider the case $\degBA W\geq 1$, and we use induction on $\degBA W$. First, we recall the notation in Section~\ref{genregwordSec} and Lemma~\ref{incompLem}:
\begin{eqnarray}
W=\genI^{m_1}\genII^{n_1}\genI^{m_2}\genII^{n_2}\cdots \genI^{m_k}\genII^{n_k},\label{necessW2}
\end{eqnarray}
for some positive integers $m_1,n_1,m_2,n_2,\ldots,m_k,n_k$, where $\degBA W=k-1$. If the biggest value among $m_2,m_3,\ldots,m_k$ has its first occurrence at index $s$, then $m_1\geq m_s$, and we consider subwords of $W$ according to the position of $\genI^{m_s}\genII^{n_s}$: \begin{eqnarray}
A=A_W &=& \genI^{m_s+1}\genII^{n_1}\cdots\genI^{m_{s-1}}\genII^{n_{s-1}},\nonumber\\
B=B_W &=& \genI^{m_s}\genII^{n_1}\cdots\genI^{m_{s-1}}\genII^{n_{s-1}},\nonumber\\
C=C_W &=& \genI^{m_s}\genII^{n_s}\cdots\genI^{m_{k}}\genII^{n_{k}},\nonumber
\end{eqnarray}
where $AC$ and $BC$ are regular, with 
\begin{eqnarray}
AC=\lpar AX\rpar\regfac P,\label{regfacProof1}\\
BC=\lpar BY\rpar\regfac Q,\label{regfacProof2}
\end{eqnarray}
for some words $X$ and $Y$. Also, we may rewrite \eqref{WABC} as
\begin{eqnarray}
 W=\begin{cases}\genI^{m_1-m_s-1}AXP, & \mbox{if }n_1\geq n_s,\\
\genI^{m_1-m_s}BYQ, & \mbox{if }n_1<n_s.\end{cases}\label{breakupW}
\end{eqnarray}
By Lemma~\ref{badegLem}, the regular words $AX$, $P$, $BY$ and $Q$, which appear in the regular factorings \eqref{regfacProof1},\eqref{regfacProof2}, may also be expressed in the form \eqref{necessW2}. That is, for each\linebreak $U\in\{AX,P,BY,Q\}$, there exists a word $\overline{U}$ such that $U=\genI\overline{U}\genII$. A routine argument may be used to show that $\degBA U=\degBA\overline{U}$. Also, \eqref{breakupW} may be rewritten as
\begin{eqnarray}
 W=\begin{cases}\genI^{m_1-m_s-1+1}\overline{AX}\cdot\genII\genI\cdot\overline{P}\genII, & \mbox{if }n_1\geq n_s,\\
\genI^{m_1-m_s+1}\overline{BY}\cdot\genII\genI\cdot\overline{Q}\genII, & \mbox{if }n_1<n_s,\end{cases}\nonumber
\end{eqnarray}
which show that one occurrence of $\genII\genI$ exists between the words $\overline{AX}$ and $\overline{P}$, and also between $\overline{BY}$ and $\overline{Q}$. Consequently, for each $U\in\{AX,P,BY,Q\}$,
\begin{eqnarray}
\degBA U = \degBA\overline U <\degBA W.\label{inductdegBA}
\end{eqnarray}
We now proceed with the use of induction on $\degBA W\geq 1$ to prove that $\lreg W\rreg\in\twoLIdeal$. Suppose that for any positive integer $t<\degBA W$, any regular word $U$ with $\degBA U =t$ has the property that $\lreg U\rreg\in\twoLIdeal$. Define $\widehat{W}$ as either $AX$ or $P$ if $n_1\geq n_s$, or as either $BY$ or $Q$ if $n_1<n_s$. If $1\leq \degBA \widehat{W}$, then, in conjunction with \eqref{inductdegBA} and the inductive hypothesis, $\lreg\widehat{W}\rreg\in\twoLIdeal$. Using whichever of the trivial inclusion compositions \eqref{IC7}--\eqref{IC10} is appropriate, we obtain
\begin{eqnarray}
\lreg W\rreg -\lcomp W_{\widehat{W}}\rcomp =0,\nonumber
\end{eqnarray}
and by Lemma~\ref{incompLem2}\ref{LIdeal1}, $\lreg W\rreg\in\twoLIdeal$. This does not cover the case when $\degBA\widehat{W}=0$ for any $\widehat{W}\in\{AX,P,BY,Q\}$. Suppose that we are indeed in such a case. If $n_1\geq n_s$, then $\widehat{W}$ can only be $AX$ or $P$, both of which we assume to have zero occurrence of $\genII\genI$. Thus, there exist $\mu_1,\nu_1,\mu_2,\nu_2\in\N$ such that $AX=\genI^{\mu_1}\genII^{\nu_1}$, $P=\genI^{\mu_2}\genII^{\nu_2}$ so that from \eqref{breakupW},
\begin{eqnarray}
W = \genI^{m_1-m_s-1+\mu_1}\genII^{\nu_1}\genI^{\mu_2}\genII^{\nu_2}.\label{trivialW}
\end{eqnarray}
A routine argument may be used to show that the regularity of $W$ implies $\nu_2\neq 0$. But then, we see from \eqref{trivialW} that $\degBA W = 0$, if one of $\nu_1$, $\mu_2$ is zero, or $\degBA W = 1$, if $\nu_1$, $\mu_2$ are both nonzero, and both cases have already been dealt with earlier. An analogous argument may be used for choices of $\widehat{W}$ when $n_1< n_s$, and the proof is complete.
\end{proof}

\begin{theorem}\label{QuotientThm} Let $\twoLie$ be a Lie algebra generated by two elements $g_1$, $g_2$. Let\linebreak $\twoGen:\{\genI,\genII\}\into\twoLie$ be the function defined by $\twoGen:\genI\mapsto g_1,\  \genII\mapsto g_2$. If\linebreak $\id:\{\genI,\genII\}\into\freeLie$ is the identity map, let $\canmap:\freeLie\into\twoLie$ be the canonical Lie algebra homomorphism, that is, the unique Lie algebra homomorphism for which $\twoGen=\canmap\circ\id$. Suppose further that under $\canmap$,
\begin{enumerate}
\item\label{notComptonotKer}  the images of the basis elements
\begin{eqnarray}
\qquad\qquad\genII,\qquad\lreg \genI\genII^n\rreg,\qquad\qquad\qquad (n\in\N),\nonumber
\end{eqnarray}
of $\normalLie$, from \eqref{HWnormalLie}, form a basis for $\twoLie$, and
\item\label{IdtoKer} for any $n\in\N$, $\canmap(\genI)$ commutes with $\canmap\lpar\lreg\genI\genII^n\rreg\rpar$, 
\end{enumerate}
then $\ker\canmap$ is generated by
\begin{eqnarray}
\qquad\qquad\lreg \genI^2\genII^n\rreg,\qquad\qquad\qquad (n\in\N\setdiff\{0\}).\label{LIdealgens}
\end{eqnarray}
\end{theorem}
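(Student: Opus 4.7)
The plan is to establish the two inclusions $\twoLIdeal \subseteq \ker\canmap$ and $\ker\canmap \subseteq \twoLIdeal$, where $\twoLIdeal$ denotes the Lie ideal of $\freeLie$ generated by \eqref{LIdealgens}. The forward inclusion is the easy half, handled directly by hypothesis \ref{IdtoKer}. The reverse inclusion is where the machinery of Section~\ref{freeSec}, especially the direct sum decomposition \eqref{TheDsum} and Lemma~\ref{TheBigLem}, will carry the weight.

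For the first inclusion, I start from the regular factoring in Example~\ref{Illus2Ex}\ref{firstform2} at $m=2$, namely $\genI^2\genII^n = \genI\regfac\genI\genII^n$, which yields $\lreg\genI^2\genII^n\rreg = \lbrak\genI,\lreg\genI\genII^n\rreg\rbrak$ for every positive integer $n$. Applying the Lie algebra homomorphism $\canmap$ and invoking hypothesis \ref{IdtoKer} immediately shows $\canmap\lpar\lreg\genI^2\genII^n\rreg\rpar = 0$ for each $n\in\N\setdiff\{0\}$, so every generator \eqref{LIdealgens} of $\twoLIdeal$ lies in the Lie ideal $\ker\canmap$, whence $\twoLIdeal \subseteq \ker\canmap$.

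For the reverse inclusion, the key input is Lemma~\ref{TheBigLem}, which says that every regular basis element of $\normalLie^c$ lies in $\twoLIdeal$; together with Lemma~\ref{ShirshovLem} this gives $\normalLie^c \subseteq \twoLIdeal \subseteq \ker\canmap$. On the other hand, hypothesis \ref{notComptonotKer} says that the images under $\canmap$ of the basis \eqref{HWnormalLie} of $\normalLie$ form a basis of $\twoLie$; linear independence of those images therefore forces $\ker\canmap\cap\normalLie = \{0\}$. Given any $f \in \ker\canmap$, I decompose it via \eqref{TheDsum} as $f = s + c$ with $s\in\normalLie$ and $c\in\normalLie^c$; since $c \in \twoLIdeal \subseteq \ker\canmap$, we get $s = f - c \in \ker\canmap\cap\normalLie = \{0\}$, and hence $f = c \in \twoLIdeal$, as required.

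The bulk of the combinatorial work has already been carried out in Lemma~\ref{TheBigLem}, so no further induction on regular words should be needed in the proof of the theorem itself. The only point that warrants care is the precise use of hypothesis \ref{notComptonotKer}: one must use that the images form a \emph{basis} of $\twoLie$ and not merely a spanning set, because it is the resulting linear independence that prevents $\canmap|_{\normalLie}$ from having nonzero kernel, and hence prevents stray elements of $\normalLie$ from sitting inside $\ker\canmap$ but outside $\twoLIdeal$.
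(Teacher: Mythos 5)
Your proposal is correct and follows essentially the same route as the paper: both halves rest on the identity $\lreg\genI^2\genII^n\rreg=\lbrak\genI,\lreg\genI\genII^n\rreg\rbrak$ together with hypothesis \ref{IdtoKer} for the inclusion $\twoLIdeal\subseteq\ker\canmap$, and on Lemma~\ref{TheBigLem} plus the direct sum \eqref{TheDsum} and the linear independence from hypothesis \ref{notComptonotKer} for the reverse inclusion. The only cosmetic difference is that you argue directly that $\ker\canmap\cap\normalLie=\{0\}$ while the paper phrases the same step contrapositively ($f\notin\normalLie^c$ implies $f\notin\ker\canmap$).
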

\begin{proof} Let $\twoLIdeal$ be the Lie ideal of $\freeLie$ generated by \eqref{LIdealgens}. Using the assumption \ref{IdtoKer}, and also Example~\ref{Illus2Ex}\ref{firstform2}, for any $n\in\N\setdiff\{0\}$,
\begin{eqnarray}
0 & = & \lbrak\canmap(\genI),\canmap\lpar\lreg\genI\genII^n\rreg\rpar\rbrak=\canmap\lpar\lbrak\genI,\lreg\genI\genII^n\rreg\rbrak\rpar,\nonumber\\
& = & \canmap\lpar\lbrak\lreg\genI\rreg,\lreg\genI\genII^n\rreg\rbrak\rpar=\canmap\lpar\lreg\genI\regfac\genI\genII^n\rreg\rpar,\nonumber\\
& = & \canmap\lpar\lreg\genI^2\genII^n\rreg\rpar.\nonumber
\end{eqnarray}
Thus, every generator of $\twoLIdeal$ is in $\ker\canmap$, but from Lemma~\ref{TheBigLem}, every regular basis element of $\normalLie^c$ is in $\twoLIdeal$. Hence,
\begin{eqnarray}
\normalLie^c\subseteq\twoLIdeal\subseteq\ker\canmap. \label{TheSetInclusions}
\end{eqnarray}
Suppose $f\in\freeLie$ such that $f\notin\normalLie^c$. By the direct sum decomposition \eqref{TheDsum}, there exist $$c_1,c_2,\ldots,c_k,e_1,e_2,\ldots,e_\ell\in\F$$ (with at least one $c_I\neq 0$) such that
\begin{eqnarray}
f = \sum_{i=1}^k c_i A_i + \sum_{j=1}^\ell e_j B_j,\label{whatisf}
\end{eqnarray}
where,  for any $i$, $A_i$ is one of the basis elements of $\normalLie$ in \eqref{HWnormalLie}, and $B_j\in\normalLie^c\subseteq\ker\canmap$ for all $j$. Appyling $\canmap$ to both sides of \eqref{whatisf}, the second summation vanishes, and so $\canmap(f)=\displaystyle\sum_{i=1}^k c_i \canmap\lpar A_i\rpar$. Tending towards a contradiction, suppose $f\in\ker\canmap$. Then\linebreak $0=\displaystyle\sum_{i=1}^k c_i \canmap\lpar A_i\rpar$ where one of the scalars $c_I$ is nonzero, but according to the assumption \ref{notComptonotKer}, $\canmap\lpar A_1\rpar,\canmap\lpar A_2\rpar,\ldots,\canmap\lpar A_k\rpar$ are linearly independent, a contradiction. Hence, $f\notin\ker\canmap$. We have thus proven $f\notin\normalLie^c$ implies $f\notin\ker\canmap$, and we may augment \eqref{TheSetInclusions} into
\begin{eqnarray}
\normalLie^c\subseteq\twoLIdeal\subseteq\ker\canmap\subseteq\normalLie^c.\nonumber
\end{eqnarray}
Therefore, $\ker\canmap=\twoLIdeal$.
\end{proof}

\section{Some Lie structure theorems}\label{ConcreteSec}

The defining relation $AB=BA+1$ for the Heisenberg-Weyl algebra may be used to replace any occurence of $AB$ in a word on $\{A, B\}$, by $BA+1$. After using the distributivity laws, the new linear combination of words on $\{A,B\}$ may be checked for any occurence of $AB$, which again may be replaced by $BA+1$. This process terminates, and the result is a linear combination of words $W$ on $\{A, B\}$ with $\deg_{AB}W=0$. That this process indeed terminates is guaranteed by the Diamond Lemma for Ring Theory \cite[Theorem 2.1]{ber78}. More precisely, the Diamond Lemma may be used to show that the elements
\begin{eqnarray}
B^mA^n,\qquad\qquad (m,n\in\N),\label{thebasis}
\end{eqnarray}
form a basis for $\HWeyl$. Given $m,n\in\N$, the relation
\begin{eqnarray}
A^nB^m = \sum_{k=0}^{\min\{m,n\}}{m\choose k}{n\choose k}k!B^{m-k}A^{n-k},\label{AnBm}
\end{eqnarray}
may be used to rewrite the product of any two basis elements from \eqref{thebasis} as a linear combination of \eqref{thebasis}. That is, the formula \eqref{AnBm} may be used to compute the structure constants of the algebra $\HWeyl$. One of the earliest appearances of the formula \eqref{AnBm} in the literature is \cite[Equation (11)]{sha79}, which has an operator-theoretic proof.

Since the defining relation for $\HWeyl$ is equivalent to $B(-A)=(-A)B+1$, there exists an algebra homomorphism $\isoH:\HWeyl\into\HWeyl$ such that
\begin{eqnarray}
\isoH &:& A\mapsto B,\quad B\mapsto -A.\label{HWisomap}
\end{eqnarray}
Using \eqref{AnBm}, each basis element of $\HWeyl$ from \eqref{thebasis} is the image under $\isoH$ of some element of $\HWeyl$, and so by some routine arguments, $\isoH$ is surjective. Also, 
\begin{eqnarray}
\isoH^2 &:& A\mapsto -A,\quad B\mapsto -B,\nonumber\\
\isoH^3 &:& A\mapsto -B,\quad B\mapsto A,\nonumber
\end{eqnarray}
where by exponentiation, we mean function composition of $\isoH$ with itself. Consequently, $\isoH^3$ serves as inverse for $\isoH$. Thus, $\isoH$ is an isomorphism. The idea that such an isomorphism exists had one of its first appearances also in the paper \cite{sha79}, but was not articulated in algebraic terms, and which is instead based on the vague idea of ``substuting'' for $A$ and $B$ some other objects which, in our description above, is equivalent to $\isoH(A)$ and $\isoH(B)$ \cite[Equation (12)]{sha79}.

The \emph{Heisenberg-Weyl Lie algebra} is the Lie algebra $\LieHWeyl$ generated by two elements $X$ and $Y$ satisfying the relation $XY-YX=1$. We immediately find that $\LieHWeyl$ is isomorphic to the Lie subalgebra of $\HWeyl$ generated by $A$ and $B$. Also, routine verification shows that $\LieHWeyl$ is three-dimensional, and is in fact, one of the classical low-dimensional Lie algebras. Thus, the algebra generators $A$ and $B$ are not able to generate the whole Lie algebra $\HWeyl$. It turns out that two additional generators are needed.

\begin{theorem}\label{GenThm} As a Lie algebra, $\HWeyl$ is generated by $A$, $B$, $BA^2$, $B^2A$.
\end{theorem}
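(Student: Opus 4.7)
Let $\coreLieCC$ denote the Lie subalgebra of $\HWeyl$ generated by the four proposed elements. Since $\{B^m A^n\  :\  m,n\in\N\}$ is a vector-space basis for $\HWeyl$, the claim reduces to showing that every basis monomial $B^m A^n$ lies in $\coreLieCC$. The plan is to proceed by strong induction on the total degree $N := m+n$. The base cases are immediate: $A$ and $B$ are generators, and $1 = AB - BA = \lbrak A,B\rbrak\in\coreLieCC$.

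The engine of the induction consists of two explicit ``raising'' formulas, obtained by direct computation using the commutation identity \eqref{AnBm}, most notably the consequences $A^n B = BA^n + nA^{n-1}$ and $A^n B^2 = B^2A^n + 2n BA^{n-1} + n(n-1)A^{n-2}$. Expanding both sides of each bracket and collecting, the cancellations yield
\begin{eqnarray}
\lbrak BA^2, B^m A^n\rbrak &=& (2m-n)\,B^m A^{n+1} + m(m-1)\,B^{m-1} A^n, \nonumber\\
\lbrak B^2 A, B^m A^n\rbrak &=& (m-2n)\,B^{m+1} A^n - n(n-1)\,B^m A^{n-1}. \nonumber
\end{eqnarray}
Each identity produces a basis monomial of one higher total degree, modulo a correction term of strictly smaller total degree.

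For the inductive step at degree $N \geq 2$, I would choose between these two formulas according to where $B^m A^n$ sits. When $n \geq 1$ and $2m-n+1 \neq 0$, applying the first identity at $(m,n-1)$ gives a bracket equal to $(2m-n+1)B^m A^n$ plus the correction $m(m-1)B^{m-1}A^{n-1}$, which has degree $N-2$ and therefore lies in $\coreLieCC$ by induction; dividing by the nonzero scalar recovers $B^m A^n$. In the remaining cases, namely $n=0$ with $m\geq 2$, or $n = 2m+1$ with $m\geq 1$, I would instead invoke the second identity at $(m-1,n)$, whose leading coefficient $(m-1)-2n$ is then nonzero by direct inspection, and argue identically.

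The main obstacle to anticipate is the simultaneous vanishing of the two scalar coefficients $2m-n+1$ and $(m-1)-2n$: if some pair $(m,n)$ killed both, the induction would stall. A short elimination shows that $n = 2m+1$ and $m = 2n+1$ together force $m = 4m+3$, which is impossible, so at least one raising step is always available. Because the construction repeatedly divides by these integer coefficients, the argument tacitly requires the base field $\F$ to have characteristic zero; outside that assumption the scheme would collapse precisely at these divisions. Apart from this combinatorial check, the proof is mechanical once the two bracket formulas are in place.
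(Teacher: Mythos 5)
Your two bracket identities are correct (I verified them against \eqref{AnBm}), your case analysis is exhaustive, and the elimination showing the two leading coefficients $2m-n+1$ and $(m-1)-2n$ cannot vanish simultaneously for nonnegative integers is right, so the proof works. It is, however, a genuinely different route from the paper's. The paper first produces all pure powers via $B^{m+1}=\frac{1}{m!}(\ad B^2A)^m(B)$ and $A^{m+1}=\frac{1}{m!}(-\ad BA^2)^m(A)$, and then obtains a mixed monomial $B^sA^t$ by inducting on $s+t$ and extracting it from the single bracket $\lbrak B^{s+1},A^{t+1}\rbrak$, whose leading coefficient $(s+1)(t+1)$ never vanishes in characteristic zero; so no case analysis on degenerate coefficients is needed there. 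Your version instead climbs one monomial at a time by applying $\ad(BA^2)$ or $\ad(B^2A)$ directly to a degree-$(N-1)$ monomial, at the price of tracking when a raising coefficient dies and switching operators. What your approach buys is locality (each step is a single adjoint action of a degree-$3$ generator, which makes the lower-central-series flavour of $\HWeyl$ more visible); what the paper's buys is a uniform formula with no exceptional cases. One further remark: you are right that both arguments divide by integers and hence tacitly need $\operatorname{char}\F=0$ (or at least invertibility of the relevant integers); the paper only announces this hypothesis \emph{after} Theorem~\ref{GenThm}, yet its own proof of that theorem already divides by $m!$ and $(s+1)(t+1)$, so your explicit flagging of the issue is a point in your favour rather than a defect.
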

\begin{proof} Let $\subHWeyl$ be the Lie subalgebra of $\HWeyl$ generated by $A$, $B$, $BA^2$, $B^2A$. Thus, $\subHWeyl\subseteq\HWeyl$, and we only need to show $\HWeyl\subseteq\subHWeyl$, but this reduces to showing that every basis element in \eqref{thebasis} is in $\subHWeyl$. Concerning those basis elements $B^sA^t$ where exactly one of $s$ or $t$ is zero, induction and the relation \eqref{AnBm} may be used to show that
\begin{eqnarray}
B^{m+1} & = &\frac{1}{m!}(\ad B^2A)^m(B)\in\subHWeyl,\\
A^{m+1} & = &\frac{1}{m!}(-\ad BA^2)^m(A)\in\subHWeyl,\label{coreLieclosure}
\end{eqnarray}
for any $m\in\N$. We now consider those basis elements $B^sA^t$ where $s$ and $t$ are both nonzero, or are both zero. We use induction on $s+t$. The smallest possibility is $s+t=0$, and by the defining relation of $\HWeyl$, $B^0A^0=1=\lbrak A,B\rbrak\in\subHWeyl$. Suppose that any basis element $B^iA^j$ with $i+j<s+t$ are elements of $\subHWeyl$. By routine computations that make use of \eqref{AnBm},
\begin{eqnarray}
B^sA^t & = &\frac{1}{(s+1)(t+1)}\lbrak B^{s+1},A^{t+1}\rbrak\nonumber\\ && +\sum_{k=2}^{\min\{s+1,t+1\}}{{s+1}\choose k}{{t+1}\choose k}\frac{k!}{(s+1)(t+1)}B^{s+1-k}A^{t+1-k}.\qquad\label{BsAt}
\end{eqnarray}
But by the previous case, $B^{s+1}$ and $A^{t+1}$ are elements of $\subHWeyl$, and so is their Lie bracket. The inductive hypothesis also guarantees that $B^{s+1-k}A^{t+1-k}\in\subHWeyl$ for all\linebreak $k\in\{2,3,\ldots,\min\{s+1,t+1\}\}$. Thus,  we find from \eqref{BsAt} that $B^sA^t\in\subHWeyl$. This completes the proof.
\end{proof}

From this point onward, we assume that the characteristic of the field $\F$ is zero.

\begin{lemma}\label{coreLieLem} If $\coreLie$ is the linear span of
\begin{eqnarray}
BA^2,\quad A^n,\qquad (n\in\N\setdiff\{0\}),\label{coreLiebasis}
\end{eqnarray}
then $\coreLie$ is a Lie subalgebra of $\HWeyl$, with a presentation by generators $\ImgenI$, $\ImgenII:=B^2A$, and relations
\begin{eqnarray}
\lpar\ad\ImgenI\rpar\lpar-\ad\ImgenII\rpar^n\lpar\ImgenI\rpar=0,\qquad\qquad\qquad (n\in\N\setdiff\{0\}).\label{bargens}
\end{eqnarray}
Furthermore, the Lie subalgebras of $\coreLie$ (other than $\coreLie$ itself) in the lower central series are given by
\begin{eqnarray}
\lpar\ad\coreLie\rpar^k\lpar\coreLie\rpar & = & \kcoreLieDer,\nonumber
\end{eqnarray}
for all $k\in\N\setdiff\{0\}$. Hence, $\coreLie$ is non-nilpotent but solvable.
\end{lemma}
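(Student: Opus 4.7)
The plan is to verify the claims in sequence: closure of $\coreLie$ under the Lie bracket together with its generation by $\ImgenI$ and $\ImgenII$, the presentation via Theorem~\ref{QuotientThm}, the explicit form of the lower central series, and finally non-nilpotence and solvability.

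First, I would compute all Lie brackets among the spanning elements of \eqref{coreLiebasis}. Powers of $A$ trivially commute, so $[A^m,A^n]=0$. For the mixed bracket, formula \eqref{AnBm} at $m=1$ yields $A^nB=BA^n+nA^{n-1}$, hence $A^n\cdot BA^2=BA^{n+2}+nA^{n+1}$, giving $[BA^2,A^n]=-n\,A^{n+1}\in\coreLie$. This establishes closure under the Lie bracket and at the same time exhibits $\coreLie$ as the Lie subalgebra generated by $\ImgenI$ and $\ImgenII$: iterating the computation yields $(-\ad\ImgenII)^n(\ImgenI)=n!\,A^{n+1}$ for all $n\geq 1$, which recovers every $A^m$ with $m\geq 2$, while $\ImgenI=A$ and $\ImgenII$ themselves are generators.

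For the presentation, I would invoke Theorem~\ref{QuotientThm} with $\twoLie:=\coreLie$, $g_1:=\ImgenI$, $g_2:=\ImgenII$, and $\canmap:\freeLie\into\coreLie$ determined by $\genI\mapsto\ImgenI,\,\genII\mapsto\ImgenII$. Hypothesis \ref{notComptonotKer} follows by combining Example~\ref{Illus3Ex}\ref{firstformBrak}, which gives $\lreg\genI\genII^n\rreg=(-\ad\genII)^n(\genI)$, with the iterated identity $(-\ad\ImgenII)^n(\ImgenI)=n!\,A^{n+1}$ established above: since $\F$ has zero characteristic, each $n!$ is nonzero, so the images $\canmap(\genII),\canmap(\lreg\genI\genII^n\rreg)$ for $n\in\N$ are precisely the spanning elements of \eqref{coreLiebasis} up to nonzero scalars, and their linear independence follows from that of the standard basis \eqref{thebasis} of $\HWeyl$. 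Hypothesis \ref{IdtoKer} is immediate since $\canmap(\lreg\genI\genII^n\rreg)$ is a scalar multiple of a power of $A$, which commutes with $\canmap(\genI)=\ImgenI$. Theorem~\ref{QuotientThm} then identifies $\ker\canmap$ as the Lie ideal of $\freeLie$ generated by $\lreg\genI^2\genII^n\rreg=(\ad\genI)(-\ad\genII)^n(\genI)$ for $n\in\N\setdiff\{0\}$; transporting under $\canmap$ yields exactly the relations \eqref{bargens}, and the first isomorphism theorem for Lie algebras supplies the desired presentation.

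The lower central series descends by induction on $k\geq 1$. The base case is read off from the brackets above: $[\coreLie,\coreLie]=\coreLieDer$. For the inductive step, bracketing $\Span_\F\{A^n:n\geq k+1\}$ against $\coreLie$ leaves only $[BA^2,A^n]=-nA^{n+1}$ as a nonzero contribution, which shifts the lower bound by one and produces $\Span_\F\{A^n:n\geq k+2\}$. Non-nilpotence follows immediately, since each $\kcoreLieDer$ is nonzero; and solvability follows because $\coreLie^{(1)}=\coreLieDer$ is abelian, forcing $\coreLie^{(2)}=0$. The main obstacle will be the verification of hypothesis \ref{notComptonotKer} of Theorem~\ref{QuotientThm}, where one must establish full linear independence of the images $\canmap(\lreg\genI\genII^n\rreg)$ rather than mere spanning; this is precisely where the zero-characteristic hypothesis is indispensable, since otherwise $(-\ad\ImgenII)^n(\ImgenI)=n!\,A^{n+1}$ would collapse for $n$ at least $\mathrm{char}\,\F$, destroying the basis.
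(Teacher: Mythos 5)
Your proposal is correct and follows essentially the same route as the paper's own proof: closure via $[BA^2,A^n]=-nA^{n+1}$, generation by $(-\ad BA^2)^n(A)=n!\,A^{n+1}$, an application of Theorem~\ref{QuotientThm} with $\genI\mapsto A$, $\genII\mapsto BA^2$ (note that you correctly work with $\ImgenII=BA^2$ rather than the misprinted $B^2A$ in the statement, since $(-\ad B^2A)(A)=2BA$ would not even lie in $\coreLie$), and the same induction for the lower central series. Your closing remark about where $\operatorname{char}\F=0$ enters matches the paper's use of the nonvanishing factorials in establishing hypothesis {\it(i)} of Theorem~\ref{QuotientThm}.
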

\begin{proof} The spanning elements \eqref{coreLiebasis} of $\coreLie$ are among the basis elements \eqref{thebasis} of $\HWeyl$. Thus, the spanning elements \eqref{coreLiebasis} are linearly independent, and hence form a basis for $\coreLie$. Let $F,G$ be any two of said basis elements. If both $F$ and $G$ are powers of $A$, or if $F=BA^2=G$, then $\lbrak F,G\rbrak=0\in\coreLie$. If one of $F$ or $G$ is $BA^2$ and the other is a power of $A$, then there exist $\varepsilon\in\{-1,1\}$ and $n\in\N\setdiff\{0\}$ such that, by the skew-symmetry of the Lie bracket, $\lbrak F,G\rbrak=\varepsilon\lbrak A^n,BA^2\rbrak$, where, with the use of \eqref{AnBm}, routine computations may be used to show $\lbrak A^n,BA^2\rbrak=nA^{n+1}\in\coreLie$. Thus, $\lbrak F,G\rbrak\in\coreLie$. At this point, we have shown that $\coreLie$ is closed under the Lie bracket, and is hence a Lie subalgebra of $\HWeyl$.

If $\coreLie_0$ is the Lie subalgebra of $\HWeyl$ generated by $A$ and $BA^2$, then the fact that $\coreLie$ is a Lie algebra and that $A,BA^2\in\coreLie$ imply $\coreLie_0\subseteq\coreLie$. The other set inclusion follows from the fact that, by \eqref{coreLieclosure}, every basis element of $\coreLie$ is in $\coreLie_0$. Hence, $\coreLie_0=\coreLie$.

Let $\HWsetmap:\{\genI,\genII\}\into\coreLie$ be defined by $\HWsetmap:\genI\mapsto A,\  \genII\mapsto BA^2$. Given the identity map $\id:\{\genI,\genII\}\into\freeLie$, let $\HWCanmap:\freeLie\into\coreLie$ be the canonical Lie algebra homomorphism, for which, $\HWsetmap = \HWCanmap\circ\id$. Thus, for each $n\in\N$,
\begin{eqnarray}
\HWCanmap(\genII)  & = & B^2A,\label{ImBasis1}\\
\HWCanmap\lpar\lreg\genI\genII^n\rreg\rpar & = &\HWCanmap\lpar\lpar-\ad\genII\rpar^n(\genI)\rpar,\nonumber
\end{eqnarray}
and since $\HWCanmap$ is a Lie algebra homomorphism,
\begin{eqnarray}
\HWCanmap\lpar\lreg\genI\genII^n\rreg\rpar & = &\lpar\lpar-\ad\HWCanmap(\genII)\rpar^n\lpar\HWCanmap(\genI)\rpar\rpar,\nonumber\\
& = & \lpar-\ad B^2A\rpar^n(A),\nonumber
\end{eqnarray}
and by \eqref{coreLieclosure},
\begin{eqnarray}
\HWCanmap\lpar\lreg\genI\genII^n\rreg\rpar = n!\  A^{n+1},\quad(n\in\N).\label{ImBasis2}
\end{eqnarray}
Since the characteristic of $\F$ is assumed to be zero, the factorials in \eqref{ImBasis2} are nonzero, and a routine argument may be used to show that, by \eqref{ImBasis1},\eqref{ImBasis2}, the images, under the canonical map $\HWCanmap$, of 
\begin{eqnarray}
\qquad\qquad\genII,\qquad\lreg \genI\genII^n\rreg,\qquad\qquad\qquad (n\in\N),\nonumber
\end{eqnarray}
form a basis for $\coreLie$. Also, for each $n\in\N$,
\begin{eqnarray}
\lbrak\HWCanmap(\genI),\HWCanmap\lpar \lreg\genI\genII^n\rreg\rpar\rbrak = \lbrak A,n!\  A^{n+1}\rbrak=0,\nonumber
\end{eqnarray}
or equivalently, $\HWCanmap(\genI)$ commutes with $\HWCanmap\lpar \lreg\genI\genII^n\rreg\rpar$. At this point, we have shown that all hypotheses in Theorem~\ref{QuotientThm} are true. Thus, $\ker\HWCanmap$ is generated by
\begin{eqnarray}
\qquad\qquad\lreg \genI^2\genII^n\rreg,\qquad\qquad\qquad (n\in\N\setdiff\{0\}),\nonumber
\end{eqnarray}
or equivalently, the quotient Lie algebra $\freeLie/\ker\HWCanmap$ has a presentation by generators $\genI$, $\genII$ and relations which assert that for each $n\in\N\setdiff\{0\}$,
\begin{eqnarray}
\lreg \genI^2\genII^n\rreg=0,\nonumber
\end{eqnarray}
which, by Example~\ref{Illus3Ex}\ref{firstformBrak}, has the equivalent form
\begin{eqnarray}
\lpar\ad\genI\rpar\lpar-\ad\genII\rpar^n\lpar\genI\rpar=0,\label{prerelation}
\end{eqnarray}
but since $\HWCanmap$ is a Lie algebra homomorphism $\freeLie\into\coreLie$, we have\linebreak $\coreLie\cong\freeLie/\ker\HWCanmap$, and so, $\coreLie$ has a presentation by generators $A=\HWCanmap(\genI)$,\linebreak $\Omega:=B^2A=\HWCanmap(\genII)$ and relations, similar in form to \eqref{prerelation}, which assert that, for each $n\in\N\setdiff\{0\}$,
\begin{eqnarray}
\lpar\ad\HWCanmap(\genI)\rpar\lpar-\ad\HWCanmap(\genII)\rpar^n\lpar\HWCanmap(\genI)\rpar & = & 0,\nonumber\\
\lpar\ad A\rpar\lpar-\ad\Omega\rpar^n\lpar A\rpar & = & 0.\nonumber
\end{eqnarray}

We now prove $\lpar\ad\coreLie\rpar^k\lpar\coreLie\rpar =\mathcal{G}_k:= \kcoreLieDer$, or equivalently $\lbrak\coreLie,\mathcal{G}_{k-1}\rbrak=\mathcal{G}_k$, by induction. Let $k\in\N\setdiff\{0\}$. Suppose that for any positive integer $t<k$, $\lpar\ad\coreLie\rpar^t\lpar\coreLie\rpar$ is spanned by all powers of $A$ with exponent at least $t+1$. If $F$ is a basis element of $\coreLie$ from \eqref{coreLiebasis}, and if the integer $n$ is at least $k$, then $\lbrak F,A^n\rbrak=0\in\mathcal{G}_k$, if $F$ is a power of $A$. The other case is when $F=BA^2$. By routine application of \eqref{AnBm},
\begin{eqnarray}
\lbrak BA^2,A^n\rbrak = -nA^{n+1}.\label{gLieStruct}
\end{eqnarray}
But since $n\geq k$, we have $n+1\geq k+1$, and the right-hand side of \eqref{gLieStruct} is an element of $\mathcal{G}_k$. We have thus shown $\lbrak\coreLie,\mathcal{G}_{k-1}\rbrak \subseteq \mathcal{G}_k$. The other set inclusion follows from \eqref{gLieStruct} which shows us that every spanning set element of $\mathcal{G}_k$ is equal to $\frac{-1}{n}$ times the Lie bracket of an element of $\coreLie$ with an element of $\mathcal{G}_{k-1}$. Hence, $\lbrak\coreLie,\mathcal{G}_{k-1}\rbrak=\mathcal{G}_k$, and the induction is complete. As a consequence, every Lie subalgebra $\lpar\ad\coreLie\rpar^k\lpar\coreLie\rpar$ in the lower central series for $\coreLie$ is not the zero Lie algebra, so $\coreLie$ is not nilpotent. However, the derived (Lie) algebra $\lpar\ad\coreLie\rpar\lpar\coreLie\rpar$ is spanned by powers of $A$, and is hence abelian. Thus, the next Lie subalgebra in the derived series is already the zero Lie algebra. Therefore, $\coreLie$ is solvable. This completes the proof. 
\end{proof}

\begin{corollary}  The generators of $\coreLie$, together with their images under $\isoH$, generate $\HWeyl$ as a Lie algebra.
\end{corollary}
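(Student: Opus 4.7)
The plan is to combine the four-element Lie generation of $\HWeyl$ from Theorem~\ref{GenThm} with a short computation showing that $BA^2$ lies in the Lie subalgebra generated by the four elements we are given. By Lemma~\ref{coreLieLem}, the generators of $\coreLie$ are $\ImgenI$ and $\ImgenII = B^2A$; applying the isomorphism \eqref{HWisomap} gives $\isoH(\ImgenI) = B$ and $\isoH(\ImgenII) = \isoH(B)^2\isoH(\ImgenI) = A^2B$. Write $\mathcal{M}$ for the Lie subalgebra of $\HWeyl$ generated by $\{A,\  B^2A,\  B,\  A^2B\}$. Since Theorem~\ref{GenThm} asserts that $\HWeyl$ is generated as a Lie algebra by $\{A,\  B,\  BA^2,\  B^2A\}$, and three of those four elements already belong to $\mathcal{M}$, the entire problem reduces to establishing the single membership $BA^2\in\mathcal{M}$.

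For this I would carry out two commutator calculations inside $\HWeyl$, both using the multiplication formula \eqref{AnBm}. The first, $\lbrak \ImgenI, A^2B\rbrak$, telescopes via $AB-BA=1$ into a scalar multiple of $A^2$, placing $A^2\in\mathcal{M}$. The second, $\lbrak A^2, B^2A\rbrak$, when expanded via \eqref{AnBm} for $A^2B^2$, yields a nonzero scalar multiple of $BA^2$ plus a scalar multiple of $A$; solving for $BA^2$ then gives $BA^2\in\mathcal{M}$. The scalar inversions are legitimate because, from the start of Section~\ref{ConcreteSec}, $\F$ is assumed to have characteristic zero.

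I do not anticipate any real obstacle here: the structural content has been absorbed into Theorem~\ref{GenThm} and Lemma~\ref{coreLieLem}, and what remains is a routine pair of commutator identities in $\HWeyl$. A conceptual remark worth including in the write-up is that the image $\isoH(\ImgenII) = A^2B$ supplies, through $\ad\ImgenI$, precisely the element $A^2$ that is \emph{absent} from $\coreLie$, and bracketing $A^2$ against $\ImgenII$ then recovers $BA^2$ up to a term in the line through $A$. In this sense the isomorphism $\isoH$ manufactures exactly the generator that Theorem~\ref{GenThm} identifies as missing from $\coreLie$ alone.
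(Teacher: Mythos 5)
Your proposal follows the same overall strategy as the paper --- reduce to Theorem~\ref{GenThm} and verify that all four of $A$, $B$, $BA^2$, $B^2A$ lie in the subalgebra generated by the given elements --- but it diverges in which pair it takes as ``the generators of $\coreLie$,'' and hence in which element must be recovered and how. The displayed ``$\ImgenII:=B^2A$'' in the statement of Lemma~\ref{coreLieLem} is an internal inconsistency of the paper: $\coreLie$ is by definition the span of \eqref{coreLiebasis}, i.e.\ of $BA^2$ and the powers of $A$, and the proof of that lemma explicitly exhibits $\coreLie$ as the Lie subalgebra generated by $A$ and $BA^2$ (the map $\HWsetmap$ sends $\genII\mapsto BA^2$). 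Note that $B^2A\notin\coreLie$ and $\lbrak A,B^2A\rbrak=2BA\notin\coreLie$, so $\{A,B^2A\}$ cannot be a generating set for $\coreLie$. The paper accordingly works with $\{A,BA^2\}$ and its $\isoH$-images $\{B,\isoH(BA^2)=-AB^2\}$; since $AB^2=B^2A+2B$ by \eqref{AnBm}, the missing fourth generator $B^2A$ is obtained by a plain linear combination, with no Lie brackets at all. Your version, having taken $B^2A$ as the second generator, must instead manufacture $BA^2$, which you do correctly: $\lbrak A,A^2B\rbrak=A^2$ and $\lbrak A^2,B^2A\rbrak=4BA^2+2A$, and the scalar inversions are harmless in characteristic zero. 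So your computations are sound and prove a true (and, by the symmetry $\isoH$, equivalent) statement, namely that $\{A,B^2A\}\cup\isoH\lpar\{A,B^2A\}\rpar$ generates $\HWeyl$; but to prove the corollary as intended you should replace $B^2A$ by $BA^2$ throughout, at which point the recovery step collapses to the one-line linear manipulation $B^2A=\pm\isoH(BA^2)-2\isoH(A)$ and your two commutator computations become unnecessary.
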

\begin{proof} Let $\HWeyl'$ be the Lie subalgebra of $\HWeyl$ generated by $A$, $BA^2$, $\isoH\lpar A\rpar=B$ and $\isoH\lpar BA^2\rpar=AB^2$, where the last generator, by routine application of \eqref{AnBm}, is equal to $B^2A+2B$. Thus, $B^2A=\isoH\lpar BA^2\rpar-2\isoH(A)\in \HWeyl'$. Since all generators of $\HWeyl$ from Theorem~\ref{GenThm} are in $\HWeyl'$, we have $\HWeyl\subseteq\HWeyl'$, but since $\HWeyl'$ is a Lie subalgebra of $\HWeyl$, $\HWeyl=\HWeyl'$.
\end{proof}

\begin{theorem} $\HWeyl\  =\  \lpar\coreLie\oplus\isoH(\coreLie)\rpar\  +\  \lbrak\coreLie,\isoH(\coreLie)\rbrak$.
\end{theorem}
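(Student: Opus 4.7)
First I would identify $\isoH(\coreLie)$ as a subspace of $\HWeyl$ relative to the canonical basis \eqref{thebasis}. Since $\isoH$ is a bijection, $\isoH(\coreLie)$ is spanned by the images $\isoH(A^n)=B^n$ for $n\in\N\setdiff\{0\}$ together with $\isoH(BA^2)=-AB^2$; applying \eqref{AnBm} gives $-AB^2=-B^2A-2B$, and since $B=\isoH(A)\in\isoH(\coreLie)$, one sees that $\isoH(\coreLie)$ has basis $\{B^n\  :\  n\geq 1\}\cup\{B^2A\}$. Comparing this with the basis $\{A^n\  :\  n\geq 1\}\cup\{BA^2\}$ of $\coreLie$ from Lemma~\ref{coreLieLem}, both consist of pairwise distinct elements of \eqref{thebasis}, so $\coreLie\cap\isoH(\coreLie)=\{0\}$ and the sum $\coreLie+\isoH(\coreLie)$ is direct.

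Set $S:=\lpar\coreLie\oplus\isoH(\coreLie)\rpar+\lbrak\coreLie,\isoH(\coreLie)\rbrak$. Since $\HWeyl$ is spanned by \eqref{thebasis}, it suffices to prove $B^mA^n\in S$ for all $m,n\in\N$, which I would do by induction on $d:=m+n$. For $d=0$, $1=\lbrak A,B\rbrak\in\lbrak\coreLie,\isoH(\coreLie)\rbrak$; for any $d\geq 1$, the pure powers $A^d\in\coreLie$ and $B^d\in\isoH(\coreLie)$ are handled directly. For a mixed basis element $B^mA^n$ with $m,n\geq 1$ and $m+n=d$, I would consider the bracket
\begin{eqnarray}
\lbrak A^{n+1},B^{m+1}\rbrak=\sum_{k=1}^{\min\{m+1,n+1\}}{{m+1}\choose k}{{n+1}\choose k}k!\  B^{m+1-k}A^{n+1-k},\nonumber
\end{eqnarray}
which lies in $\lbrak\coreLie,\isoH(\coreLie)\rbrak$ because $A^{n+1}\in\coreLie$ and $B^{m+1}\in\isoH(\coreLie)$. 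The $k=1$ summand equals $(m+1)(n+1)\  B^mA^n$, while every $k\geq 2$ summand is a scalar multiple of some $B^{m+1-k}A^{n+1-k}$ of total degree at most $d-2$. Since the characteristic of $\F$ is zero, $(m+1)(n+1)$ is invertible, and the inductive hypothesis applied to each lower-degree term gives $B^mA^n\in S$.

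The main obstacle, in my view, is resisting the temptation to handle the boundary cases $m=1$ or $n=1$ with separate brackets involving the distinguished generators $BA^2\in\coreLie$ and $B^2A\in\isoH(\coreLie)$. The cleaner observation is that $A^{n+1}\in\coreLie$ and $B^{m+1}\in\isoH(\coreLie)$ for every $m,n\geq 1$, so the single family $\lbrak A^{n+1},B^{m+1}\rbrak$ handles all mixed basis elements uniformly. After this, the remaining work is a routine degree-count resting on the commutation identity \eqref{AnBm} that underlies the structure of $\HWeyl$.
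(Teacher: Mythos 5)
Your proposal is correct and follows essentially the same route as the paper's own proof: the mixed basis elements $B^mA^n$ are obtained by induction on $m+n$ from the bracket $\lbrak A^{n+1},B^{m+1}\rbrak=\lbrak A^{n+1},\isoH\lpar A^{m+1}\rpar\rbrak$ via \eqref{AnBm}, and the directness of $\coreLie+\isoH(\coreLie)$ is established by identifying $\isoH(\coreLie)$ with the span of $B^2A$ and $B^n$ $(n\geq 1)$ and observing that this basis and that of $\coreLie$ are disjoint subsets of the canonical basis \eqref{thebasis}. The only difference is the order of the two steps, which is immaterial.
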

\begin{proof} First, we show that every basis element $B^mA^n$ of $\HWeyl$ from \eqref{thebasis} is an element of $\coreLie+\isoH(\coreLie)+\lbrak\coreLie,\isoH(\coreLie)\rbrak$, and to do this, we use induction on $m+n$. Suppose that for any nonnegative integer $t<m+n$, any $B^iA^j$ with $i+j=t$ is an element of\linebreak $\coreLie+\isoH(\coreLie)+\lbrak\coreLie,\isoH(\coreLie)\rbrak$. By routine computations that make use of \eqref{AnBm}, \eqref{HWisomap},
{\scriptsize\begin{eqnarray}
B^mA^n & = &  \frac{-1}{(m+1)(n+1)}\lbrak A^{n+1},\isoH\lpar A^{m+1}\rpar\rbrak\label{BmAnSum1}\\
& & +\frac{1}{(m+1)(n+1)}\sum_{k=2}^{\min\{m+1,n+1\}}{{m+1}\choose k}{{n+1}\choose k}k!\  B^{m+1-k}A^{n+1-k},\label{BmAnSum2}
\end{eqnarray}}

\noindent where, in \eqref{BmAnSum2}, the sum of the exponents of $B$ and $A$ range from either $m-n$ or $n-m$, up to $m+n-2$. All such sums of exponents are strictly less than $m+n$. By the inductive hypothesis, every $B^{m+1-k}A^{n+1-k}$ in \eqref{BmAnSum2} is an element of the sum\linebreak $\coreLie+\isoH(\coreLie)+\lbrak\coreLie,\isoH(\coreLie)\rbrak$, and so is any linear combination of them, such as the summation in \eqref{BmAnSum2}. Also, the Lie bracket in \eqref{BmAnSum1} is an element of $\lbrak\coreLie,\isoH(\coreLie)\rbrak$. Thus, $B^mA^n$ is an element of $\coreLie+\isoH(\coreLie)+\lbrak\coreLie,\isoH(\coreLie)\rbrak$, and by induction, so is any basis element of $\HWeyl$ from \eqref{thebasis}. Hence, $\HWeyl$ is contained in the sum $\coreLie+\isoH(\coreLie)+\lbrak\coreLie,\isoH(\coreLie)\rbrak$, every summand in which, is a vector subspace of $\HWeyl$. Therefore, $\HWeyl=\coreLie+\isoH(\coreLie)+\lbrak\coreLie,\isoH(\coreLie)\rbrak$.

What remains to be shown is that the sum of $\coreLie$ and $\isoH\lpar\coreLie\rpar$ is direct. The elements
\begin{eqnarray}
BA^2,\quad B^2A,\quad A^n,\quad B^n,\qquad (n\in\N\setdiff\{0\}),\label{coreLieCbasis0}
\end{eqnarray}
of $\HWeyl$ are among the basis elements \eqref{thebasis}. Thus, the elements \eqref{coreLieCbasis0} are linearly independent, and if we partition \eqref{coreLieCbasis0} into two: the basis elements of $\coreLie$ from Lemma~\ref{coreLieLem}, and
\begin{eqnarray}
B^2A,\quad B^n,\qquad (n\in\N\setdiff\{0\}),\label{coreLieCbasis}
\end{eqnarray}
then the linear span of \eqref{coreLieCbasis0} is equal to the direct sum $\coreLie\oplus\coreLieC$ where $\coreLieC$ is the linear span of \eqref{coreLieCbasis}. To complete the proof, we only need to show $\coreLieC=\isoH\lpar\coreLie\rpar$. The linear inpendence of \eqref{coreLieCbasis0} implies the linear independence of \eqref{coreLieCbasis}, and so the spanning elements \eqref{coreLieCbasis} of $\coreLieC$ form a basis for $\coreLieC$. Using \eqref{AnBm}, 
\begin{eqnarray}
A^2B & = & BA^2 + 2A,\nonumber\\
AB^2 & = & B^2A + 2B,\nonumber
\end{eqnarray}
and so, by \eqref{HWisomap},
\begin{eqnarray}
B^2A & = &\isoH(A^2B)=\isoH(BA^2+2A),\qquad \mbox{where }BA^2+2A\in\coreLie,\label{PHIg1}\\
B^n & = & \isoH(A^n),\label{PHIg2}\\
\isoH(BA^2) & = & - AB^2 = -B^2A - 2B\  \in\  \coreLieC,\label{PHIg3}
\end{eqnarray}
By \eqref{PHIg1}, \eqref{PHIg2}, every basis element of $\coreLieC$ is in $\isoH(\coreLie)$, while by \eqref{PHIg2}, \eqref{PHIg3}, the image, under $\isoH$, of every basis element of $\coreLie$ is in $\coreLieC$. Thus, $\coreLieC=\isoH(\coreLie)$, and this completes the proof.
\end{proof}

\subsection{Further directions} At this point,  one continuation of this study we can suggest is the exploration of the effect of intersection compositions \cite[Definition 4.1(i)]{bok07}, or alternatively, \cite[p. 38]{ufn95}, on the choice of the Lie subalgebra of $\HWeyl$ (perhaps different from $\coreLie$) which may be used to decompose $\HWeyl$ in terms of such a Lie subalgebra and of its image under $\isoH$. Another possibility is the extension, or the development of analogs, of the methods in this study for an arbitrary $q$-deformed Heisenberg algebra.



\begin{funding}
This work was supported by the Research and Grants Management Office, formerly the University Research Coordination Office (URCO), of De La Salle University, with grant number 15FU1TAY20-1TAY21.
\end{funding}


\end{document}